\def \[{\begin{equation}}
\def \]{\end{equation}}
\newtheorem{thm}{Theorem}[section]
\newtheorem{lem}[thm]{Lemma}
\newtheorem{cor}[thm]{Corollary}
\begin{document}

\setlength{\baselineskip}{20pt}
\begin{center}{\Large \bf The maximum matching extendability and factor-criticality of 1-planar graphs}

\vspace{4mm}

{Jiangyue Zhang, Yan Wu, Heping Zhang\footnote{The corresponding author.}
\renewcommand\thefootnote{}\footnote{E-mail addresses: jyzhang20@lzu.edu.cn(J. Zhang), wuyan20@lzu.edu.cn(Y. Wu), zhanghp@lzu.edu.

cn(H. Zhang).}}

\vspace{2mm}

\footnotesize{ School of Mathematics and Statistics, Lanzhou University, Lanzhou, Gansu 730000, P. R. China}

\end{center}
\noindent {\bf Abstract}:  A graph is $1$-$planar$ if it can be drawn in the plane so that each edge is crossed by at most
one other edge. Moreover, a 1-planar graph $G$ is $optimal$ if it satisfies $|E(G)|=4|V(G)|-8$. 
J. Fujisawa et al. \cite{16} first considered matching extension of optimal 1-planar graphs,   obtained that each optimal 1-planar graph of even order is 1-extendable  and characterized 2-extendable optimal 1-planar graphs and 3-matchings extendable to perfect matchings as well. In this short paper, we prove that no optimal $1$-planar graph is 3-extendable.  Further we mainly obtain that no 1-planar graph is 5-extendable by the discharge method and also  construct a 4-extendable 1-planar graph.  Finally we get that no 1-planar graph is 7-factor-critical and no optimal 1-planar graph is 6-factor-critical.

\vspace{2mm} \noindent{\bf Keywords}: $n$-extendable graph; $k$-factor-critical graph; 1-planar graph; optimal 1-planar graph
\vspace{2mm}

\noindent{\bf AMS subject classification:} 05C70,\ 05C10

 {\setcounter{section}{0}
\section{Introduction}\setcounter{equation}{0}
All graphs considered here are finite, simple and undirected. We use $V(G)$ and $E(G)$ to denote the vertex set and the edge set of a graph $G$ respectively.  The order of a graph refers to the number of vertices. For a vertex $v$ in $G$, let ${d}_{G}(v)$ denote the degree of a vertex $v$ in $G$, the number of edges incident with $v$. Let $\delta(G)$ denote the minimum degree in $G$. A \emph{matching} $M$ of a graph $G$ is a subset of $E(G)$ such that any two edges of $M$ have no end-vertices in common. A matching of $k$ edges is called a \emph{$k$-matching}. A \emph{perfect matching} of a graph is a matching covering all vertices of the graph.

In 1980 M.D. Plummer  \cite{2} introduced the concept of  $n$-extendable graphs. For an integer $n\geq 0$,  a connected graph $G$ with at least $2n+2$ vertices is said to be {\em $n$-extendable} if it admits an $n$-matching and each $n$-matching  can be a subset of  a perfect matching of $G$.  
Matching extendability of graphs was widely investigated; see two surveys   \cite{PL96, PL08} and a book \cite{YL09}. Plummer  proved that every $n$-extendable graph is $(n-1)$-extendable whenever $n\geq 1$ and the following basic result.

\begin{lem}[\cite{2}]\label{1.1} If  $G$ is $n$-extendable, then $G$ is $(n+1)$-connected.
\end{lem}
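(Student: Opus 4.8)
\noindent\emph{Proof idea.}
The case $n=0$ needs no argument: ``$1$-connected'' just means ``connected'', which is already part of the definition of an $n$-extendable graph. So assume $n\ge 1$. Since every $n$-extendable graph is $1$-extendable, $G$ has a perfect matching, so $|V(G)|$ is even; together with the hypothesis $|V(G)|\ge 2n+2$ this also gives $|V(G)|\ge n+2$, so that $(n+1)$-connectedness is not vacuous. Suppose for contradiction that $\kappa(G)\le n$, and fix a vertex cut $S$ of minimum size, say $|S|=k\le n$; let $C_1,\dots,C_m$ with $m\ge 2$ be the components of $G-S$. The plan is to exhibit an $n$-matching $M$ of $G$ that lies in no perfect matching, which is impossible in an $n$-extendable graph.

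The first ingredient is a standard consequence of the minimality of $S$: every vertex of $S$ has a neighbour in every component $C_i$. Indeed, if some $x\in S$ had no neighbour in $C_i$, then, as $m\ge 2$, the set $S\setminus\{x\}$ would still separate $C_i$ from the union of the remaining components, contradicting $|S|=\kappa(G)$. This gives complete freedom to route matching edges out of $S$ into whichever components we wish.

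Now construct $M$ in two stages. First choose a small matching $M_0$ covering all of $S$: use some edges inside $S$, and match the remaining vertices of $S$ by edges going into the components; by the observation above these ``cross'' edges may be distributed among the $C_i$ as we like, subject only to a component having enough vertices, and $|M_0|\le|S|=k\le n$. After deleting $V(M_0)\supseteq S$, the graph $G-V(M_0)$ splits along the $C_i$, with $C_i$ losing exactly $c_i$ vertices where $\sum_i c_i$ is the number of cross edges used. A short parity count (using $|V(G)|$ even and $\sum_i|C_i|=|V(G)|-k$) shows the numbers $|C_i|-c_i$ always sum to an even number; and since $m\ge 2$ we can always force at least one of them to be odd, e.g.\ by rerouting a single cross edge between two components, or, if no cross edge was needed, by replacing one edge inside $S$ with two cross edges into two different components (again using the neighbour observation, and $k\le n$ to stay within budget). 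Second, pad $M_0$ up to exactly $n$ edges by adjoining $n-|M_0|$ pairwise disjoint edges lying inside the components, chosen so as not to restore even parity of the distinguished component; there is room for this because after deleting $V(M_0)$ at least $|V(G)|-2k\ge 2(n-k)+2$ vertices survive. Then $G-V(M)$ has a component of odd order, so no perfect matching of $G$ contains $M$, which is the desired contradiction.

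The step I expect to be the real obstacle is the bookkeeping in this construction rather than any single clever idea: one must check that $n-|M_0|$ disjoint edges really can be fitted inside the components — delicate precisely when $G-S$ has tiny (say single-vertex) components, where the remedy is instead to leave such a component uncovered so that it becomes an isolated vertex of $G-V(M)$ — and that the parity adjustment is always compatible with the chosen routing. I would organize the proof into cases according to $m$ and to the parities $|C_i|\bmod 2$, invoking throughout both the observation that each vertex of $S$ meets every component and the size bound $|V(G)|\ge 2n+2$ to supply the necessary slack.
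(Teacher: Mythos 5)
The paper gives no proof of Lemma \ref{1.1} (it is quoted from Plummer \cite{2}), so your attempt has to be judged on its own terms. Your overall plan --- exhibit an $n$-matching $M$ covering a minimum cutset $S$ so that $G-V(M)$ has an odd component --- is the standard route, and the $n=0$ remark and the observation that every vertex of a minimum cut has a neighbour in every component are correct. But the construction is not justified at two points. From that observation you conclude that the cross edges of $M_0$ ``may be distributed among the $C_i$ as we like, subject only to a component having enough vertices''. This does not follow: nothing in your argument prevents several vertices of $S$ from being forced onto the \emph{same} neighbour inside a component, so even covering $S$ at all, let alone with a prescribed distribution of cross edges, needs an argument. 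The standard repair is a Hall/deficiency argument using the minimality of $S$: if $T\subseteq S$ satisfies $|N_{G-S}(T)|<|T|$, then $(S\setminus T)\cup N_{G-S}(T)$ is a vertex cut of size less than $|S|$ (the side $ (G-S)\setminus N_{G-S}(T)$ is nonempty because $|V(G)|-|S|\ge n+2$), a contradiction; Hall's theorem then matches all of $S$ into $G-S$. The same defect infects your parity fix: rerouting a cross edge into another component, or replacing an edge inside $S$ by two cross edges, requires neighbours there that are not already endpoints of $M_0$, and ``a neighbour exists'' is not enough.

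The step that actually fails as written is the padding. Knowing that $|V(G)|-2|M_0|\ge 2(n-k)+2$ vertices survive does not yield $n-|M_0|$ pairwise disjoint edges among them: after deleting $V(M_0)\supseteq S$ all surviving edges lie inside the components, and a vertex count can never bound the maximum matching of that graph from below; your suggested remedy (leave tiny components uncovered) does not produce the matching of size exactly $n$ that the definition of $n$-extendability requires. The cleanest way to close this hole is to invoke the other result of Plummer quoted in this paper just before Lemma \ref{1.1}, namely that an $n$-extendable graph is $(n-1)$-extendable, hence $k'$-extendable for every $0\le k'\le n$: then a matching of size $|M_0|\le n$ that covers $S$ and leaves an odd piece in some component already contradicts extendability, and no padding is needed. (Alternatively, when some component $C$ has at least $|S|$ vertices, the deficiency argument above lets you match all of $S$ into $C$ and fix parity by diverting a single edge into a second component; but some such case analysis must actually be carried out --- it cannot be dismissed as bookkeeping.) So the strategy is the right one, but there are genuine gaps at exactly the places you flagged, plus one you did not (the free distribution of the cross edges).
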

%

Plummer  studied the matching extendability of planar graphs and obtained

\begin{thm}[\cite{4}]\label{thm1.3} No planar graph is 3-extendable.
\end{thm}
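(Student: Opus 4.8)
The plan is to argue by contradiction: assume $G$ is a $3$-extendable planar graph. By Lemma~\ref{1.1}, $G$ is $4$-connected, so $\delta(G)\ge 4$, $|V(G)|$ is even (it has a perfect matching), and $|V(G)|\ge 8$ by definition. Fix a plane embedding of $G$. The aim is to produce a $3$-matching $M$ such that $G-V(M)$ has no perfect matching; the cleanest way to do this is to find such an $M$ with $v\notin V(M)$ but $N(v)\subseteq V(M)$ for some vertex $v$ of small degree, for then $v$ is an isolated vertex of $G-V(M)$, so $G-V(M)$ has no perfect matching, contradicting $3$-extendability.

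First I would record two structural facts about a vertex $v$ with $d(v)\le 5$, listing its neighbours $v_1,\dots,v_{d(v)}$ in the rotation order around $v$. (i) If $v_i,v_j$ are not consecutive in this cyclic order, then $v_iv_j\notin E(G)$; otherwise the triangle $vv_iv_j$ is a closed curve with at least one neighbour of $v$ strictly inside it and at least one strictly outside (both sides are nonempty because $v_i,v_j$ are non-consecutive), so $\{v,v_i,v_j\}$ is a $3$-cut, contradicting $4$-connectivity. Hence $G[N(v)]$ is a spanning subgraph of the cycle $v_1v_2\cdots v_{d(v)}v_1$. (ii) The same $4$-connectivity argument shows no vertex can lie in the interior of a triangle $vv_iv_{i+1}$, so the face incident to $v$ between $vv_i$ and $vv_{i+1}$ is a triangle exactly when $v_iv_{i+1}\in E(G)$; in particular the number of triangular faces at $v$ equals $e(G[N(v)])$.

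By Euler's formula some vertex $v$ satisfies $d(v)\in\{4,5\}$. The main construction handles the case in which $G[N(v)]$ contains two vertex-disjoint edges $e_1,e_2$ (for $d(v)=5$ this is automatic whenever $e(G[N(v)])\ge 3$, since $C_5$ is triangle-free). If $d(v)=5$, the remaining neighbour $f$ is adjacent within $N(v)$ to at most its two rotation-neighbours, so, since $d(f)\ge 4$, it has a neighbour $x\notin N(v)\cup\{v\}$, and $M=\{e_1,e_2,fx\}$ is a $3$-matching with $v\notin V(M)$ and $N(v)\subseteq V(M)$, which isolates $v$ in $G-V(M)$. If $d(v)=4$ then $N(v)=V(e_1)\cup V(e_2)$, and one argues the same way with $M=\{e_1,e_2,e_3\}$ for any edge $e_3$ avoiding $\{v\}\cup N(v)$; if no such $e_3$ exists then already the $2$-matching $\{e_1,e_2\}$ isolates $v$ in $G-N(v)$, contradicting $2$-extendability. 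The only remaining $d(v)=4$ situation is when $G[N(v)]$ has no two disjoint edges, i.e.\ $G[N(v)]$ is a single edge or a path on three vertices: pick one edge of $G[N(v)]$; by (i) each of the at most two uncovered neighbours of $v$ has at least two neighbours outside $N(v)\cup\{v\}$, so they can be matched there by distinct edges, again isolating $v$.

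This leaves the sparse-link cases, which I expect to be the crux: $d(v)=4$ with $N(v)$ independent, and $d(v)=5$ with $e(G[N(v)])\le 1$ or with exactly two link edges sharing a vertex. Here a $3$-matching cannot cover $N(v)$, so I would argue with a Tutte set. Put $R=V(G)\setminus(N(v)\cup\{v\})$. I would first show $G[R]$ is connected: if $C_1,C_2$ were two components of $G-v-N(v)$, then $4$-connectivity forces $|N_G(C_i)|\ge 4$ and, picking $c_i\in C_i$, yields four internally disjoint $c_i$--$v$ paths whose first exits from $C_i$ are four distinct vertices of $N(v)$; since $|N(v)|\le 5$ the two reachable $4$-subsets of $N(v)$ meet in a common triple, and this triple together with $v,c_1,c_2$ gives a $K_{3,3}$-subdivision, impossible in a planar graph. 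Parity of $|V(G)|$ then makes $|R|$ have a fixed (and relevant) parity. Because $N(v)$ is essentially independent, for a suitable $3$-matching $M$ covering three vertices of $N(v)$ and three vertices of $R$, every perfect matching of $G-V(M)$ is forced to match $v$ to its single uncovered neighbour and then to perfectly match $G$ restricted to the remaining vertices of $R$; the plan is to choose the three $R$-vertices — along the boundary of the face vacated by $v$, or at a low-degree vertex of $G[R]$ — so that this residual subgraph has no perfect matching. I expect carrying out this Tutte-set step cleanly to be the main obstacle; an alternative worth trying is a discharging argument with charge $d(v)-4$ on each vertex and $|f|-4$ on each face (total $-8$), aiming to show that some vertex of degree $4$ or $5$ must have enough triangular faces around it to fall into the generic case above.
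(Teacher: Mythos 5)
Note first that the paper does not prove this statement at all: it is quoted as a known theorem of Plummer (reference [4]), so there is no in-paper proof to compare with; I therefore assess your argument on its own. Your ``dense'' cases are fine and are in the same spirit as the technique the paper uses elsewhere (Lemma~\ref{2.1} plus the trick of covering $N(v)$ by a $3$-matching that misses $v$): the structural facts (i)--(ii), the Euler-formula choice of a vertex $v$ with $d(v)\in\{4,5\}$, and the isolation construction when $G[N(v)]$ contains a $2$-matching (and the $d(v)=4$ subcases with one edge or two adjacent edges in $G[N(v)]$) are all correct.

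The genuine gap is exactly where you flag it: the sparse-link cases, namely $d(v)=4$ with $N(v)$ independent and $d(v)=5$ with $G[N(v)]$ having no $2$-matching. These are not degenerate leftovers; they are precisely the configurations \emph{not} excluded by the Lemma~\ref{2.1}-type obstruction (a vertex of degree $n+t$ may not have a $t$-matching in its neighbourhood), so they are the actual content of Plummer's theorem, and in these cases no $3$-matching can cover $N(v)$, so the isolation trick is structurally unavailable. Your Tutte-set plan is only a plan: the connectivity of $G[R]$ via the $K_{3,3}$-subdivision argument is correct but does not by itself produce anything, and the key step --- choosing $x_1,x_2,x_3\in R$ matched to three neighbours of $v$ so that the residual graph on $R\setminus\{x_1,x_2,x_3\}$ has no perfect matching --- is asserted as a hope, with no mechanism for forcing an odd component or a violated Tutte condition; nothing in planarity or $4$-connectivity obviously prevents every such residual graph from having a perfect matching, so this must be argued, and it is the heart of the proof. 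The discharging fallback is also unsubstantiated: a counting with charges $d(v)-4$ and $d(f)-4$ only shows the graph has many triangles overall, and the inequality one would need (that some vertex of degree $4$ or $5$ carries two disjoint neighbourhood edges) does not follow from the charge balance, since the excess triangles can sit at vertices of degree $6$, $7$ or $8$. As it stands, the proposal proves the theorem only for graphs in which some minimum-type vertex has a $2$-matching in its neighbourhood, and the remaining cases require a genuinely different argument (Plummer's original proof handles them by a finer analysis of the faces around $v$), so the proof is incomplete.
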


\begin{thm}[\cite{PL92}]\label{2-ext}Every 5-connected planar graph of even order is 2-extendable.
\end{thm}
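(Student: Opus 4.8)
\medskip
\noindent\textbf{Proof plan.}\quad The plan is to reduce $2$-extendability to Tutte's $1$-factor theorem and then exploit the sparsity of planar and of bipartite planar graphs. The requirements ``connected'', ``order at least $6$'', and ``admits a $2$-matching'' come for free, since a $5$-connected graph is connected, has minimum degree $\ge 5$, hence order $\ge 6$, and certainly contains $2$-matchings. So the content is that every $2$-matching $M=\{e_1,e_2\}$ of $G$ extends to a perfect matching, equivalently that $G':=G-V(M)$ has a perfect matching. Since $|V(G)|$ is even, $G'$ has even order, so by Tutte's $1$-factor theorem it suffices to prove $o(G'-S)\le|S|$ for all $S\subseteq V(G')$, where $o(\cdot)$ denotes the number of odd components. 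Putting $T:=V(M)\cup S$ (a disjoint union, so $|T|=|S|+4$), this becomes the single inequality $o(G-T)\le|T|-4$, which I would establish for every $T\subseteq V(G)$ with $|T|\ge4$.

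The structural core is a bound on the number $m$ of components of $G-T$. If $|T|=4$, then $5$-connectivity makes $G-T$ connected (it has $\ge2$ vertices, as $|V(G)|\ge6$), and it has even order, so $o(G-T)=0=|T|-4$. Suppose $|T|\ge5$. If $m\le1$ then $o(G-T)\le1\le|T|-4$. If $m\ge2$, then for every component $C$ of $G-T$ the set $N(C)$ is a cutset of $G$, whence $|N(C)|\ge5$; contracting each component of $G-T$ to a single vertex and keeping the simple bipartite planar subgraph between $T$ and the $m$ new vertices produces a graph with at least $5m$ edges on $|T|+m$ vertices. As a simple bipartite planar graph on $v$ vertices has at most $2v-4$ edges, $5m\le2(|T|+m)-4$, so $m\le(2|T|-4)/3$.

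It remains to verify $o(G-T)\le m\le|T|-4$. For $|T|\ge8$ this is immediate from $(2|T|-4)/3\le|T|-4$. For $|T|\in\{5,6,7\}$ the estimate only gives $m\le2,2,3$ respectively, which is one too large in the odd cases, so I would invoke the parity identity $o(G-T)\equiv|V(G)|-|T|\equiv|T|\pmod2$ (valid because $|V(G)|$ is even): for $|T|=5$, $o(G-T)$ is odd and $\le2$, hence $=1=|T|-4$; for $|T|=6$, $o(G-T)\le2=|T|-4$; for $|T|=7$, $o(G-T)\le3=|T|-4$. In every case $o(G-T)\le|T|-4=|S|$, so $G'$ has a perfect matching and $G$ is $2$-extendable.

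The main obstacle I expect is precisely this small-$|T|$ range: the naive ``$G$ contains no $K_{3,3}$'' argument only bounds $o(G-T)$ for large $T$, so one must combine the planar edge count with both the parity of $|V(G)|$ and the full force of $5$-connectivity (note that $|T|=4$ already fails for merely $4$-connected planar graphs, consistent with the connectivity hypothesis being sharp). The single idea that makes everything go through is to route the components-count estimate through the bipartite-planar inequality $|E|\le2|V|-4$ after contraction, rather than through $K_{3,3}$-nonplanarity alone.
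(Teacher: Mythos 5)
Your argument is correct as written: the reduction of $2$-extendability to Tutte's condition $o(G-T)\le |T|-4$ for $T\supseteq V(M)$, the use of $5$-connectivity to force $|N(C)|\ge 5$ for each component $C$ of $G-T$ when there are at least two components, the contraction to a simple bipartite planar graph with the edge bound $e\le 2v-4$ giving $m\le (2|T|-4)/3$, and the parity of $o(G-T)$ (from $|V(G)|$ even) to close the cases $|T|\in\{5,6,7\}$, together with $5$-connectivity settling $|T|=4$ directly, cover all cases without gaps. Note, however, that the paper itself offers no proof of this statement: it is quoted as Theorem~\ref{2-ext} from Plummer \cite{PL92} and used only as background for the value $\mu(S_0)=3$. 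So there is nothing internal to compare against; measured against the literature, your route (Tutte's theorem plus the bipartite-planar count after contracting components, rather than any $K_{3,3}$-minor argument) is essentially the standard proof of this result, and its virtue here is that it is short, self-contained, and makes explicit where each hypothesis enters: evenness of $|V(G)|$ supplies the parity step, and $5$-connectivity is used twice, once for $|T|=4$ and once to give each contracted vertex degree at least $5$.
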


For any surface $\Sigma$, Plummer \cite{PL88} also posed the problem of determining the least
integer $\mu(\Sigma)$ such that no graph $G$ embedded in $\Sigma$ is $\mu(\Sigma)$-extendable. $\mu(\Sigma)$ is called the matching extendability of the surface $\Sigma$. So by Theorems \ref{thm1.3} and \ref{2-ext}, $\mu(S_{0})=3$ for the sphere $S_0$. Later, N. Dean \cite{9} completely solved this problem by obtaining the following result:

\begin{thm}[\cite{9}]\label{thm1.4} If $\Sigma$ is any surface (orientable or non-orientable) other than the sphere, then $\mu(\Sigma) =2+\lfloor \sqrt{4-2\chi(\Sigma)}\rfloor$, where $\chi(\Sigma)$ is the Euler characteristic of the surface $\Sigma$.
\end{thm}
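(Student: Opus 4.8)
The plan is to establish two matching bounds. The \emph{upper bound} asserts that no graph embeddable in $\Sigma$ is $n$-extendable once $n=2+\lfloor\sqrt{4-2\chi(\Sigma)}\rfloor$, and the \emph{lower bound} exhibits an $n$-extendable graph embeddable in $\Sigma$ for $n=1+\lfloor\sqrt{4-2\chi(\Sigma)}\rfloor$. It is convenient to write $\tilde\gamma(\Sigma)=2-\chi(\Sigma)$ for the Euler genus, so the target reads $\mu(\Sigma)=2+\lfloor\sqrt{2\tilde\gamma(\Sigma)}\rfloor$; the sphere has $4-2\chi=0$ and is genuinely exceptional, since there the construction degenerates while in fact $\mu(S_0)=3$ by Theorems~\ref{thm1.3} and~\ref{2-ext}.

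\emph{Upper bound.} Suppose $G$ is embedded in $\Sigma$ and is $n$-extendable, so $|V(G)|\ge 2n+2$ and, by Lemma~\ref{1.1}, $G$ is $(n+1)$-connected; in particular $\delta(G)\ge n+1$. Euler's formula together with $2|E(G)|\ge 3|F(G)|$ gives $|E(G)|\le 3|V(G)|-3\chi(\Sigma)$, so $G$ has a vertex $v$ with $d_G(v)\le\lfloor 6-6\chi(\Sigma)/|V(G)|\rfloor$, a bound that is largest when $|V(G)|=2n+2$. The crucial, extendability-specific step is a local obstruction of the following flavour: if $d_G(v)\le 2n$, then — unless $N_G(v)$ is nearly independent, or the closed neighbourhood of $v$ is cut off from the rest of $G$ by a small separator — one can build an $n$-matching $M$ with $v\notin V(M)$ but $N_G(v)\subseteq V(M)$, using a matching inside $G[N_G(v)]$, edges leaving $N_G(v)$, and some padding edges; then $v$ is an isolated vertex of $G-V(M)$, so $G-V(M)$ has no perfect matching, contradicting $n$-extendability. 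The small-separator escape is excluded directly by $(n+1)$-connectivity (or, if the separator has size exactly $n+1$, the blob it cuts off is small and can be analysed by hand), and the nearly-independent escape is exactly the behaviour of the extremal graphs $K_{m,m}$, for which one argues instead that such a graph is still too dense to embed in $\Sigma$ once $n$ passes the stated threshold. Optimising the resulting inequalities in $n$, plus a by-hand check of the finitely many surfaces of small Euler genus, yields $n\le 1+\lfloor\sqrt{4-2\chi(\Sigma)}\rfloor$.

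\emph{Lower bound.} I would produce one explicit $n$-extendable graph on $\Sigma$, a complete bipartite graph. First, $K_{m,m}$ is $(m-1)$-extendable: it is connected, has a perfect matching, has exactly $2m=2(m-1)+2$ vertices, and any $(m-1)$-matching leaves one uncovered vertex in each colour class, and those two vertices are joined by an edge. Second, by Ringel's theorem $K_{m,m}$ has orientable genus $\lceil(m-2)^2/4\rceil$ and non-orientable genus $\lceil(m-2)^2/2\rceil$. If $\Sigma$ is orientable of genus $g$, put $m=2+\lfloor 2\sqrt g\rfloor$; then $(m-2)^2\le 4g$, and since a square is congruent to $0$ or $1$ modulo $4$ one checks $\lceil(m-2)^2/4\rceil\le g$, so $K_{m,m}$ embeds in $\Sigma$; moreover $4-2\chi(\Sigma)=4g$, so $K_{m,m}$ is $(m-1)=(1+\lfloor\sqrt{4-2\chi(\Sigma)}\rfloor)$-extendable. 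The non-orientable case is the same with $m=2+\lfloor\sqrt{2\tilde\gamma(\Sigma)}\rfloor$ and the non-orientable genus. Hence $\mu(\Sigma)\ge 2+\lfloor\sqrt{4-2\chi(\Sigma)}\rfloor$, and together with the upper bound the formula follows.

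The real difficulty is the upper bound, namely pinning down the constant under the square root. The crude combination of Lemma~\ref{1.1}, Euler's formula, and $|V(G)|\ge 2n+2$ only yields a bound of the shape $\mu(\Sigma)\le 2+\sqrt{9-3\chi(\Sigma)}$, which is strictly weaker than $2+\lfloor\sqrt{4-2\chi(\Sigma)}\rfloor$; improving the effective coefficient from $\sqrt 9=3$ down to $\sqrt 4=2$ is precisely the role of the local matching obstruction, and making it rigorous — the Tutte-type analysis that turns a small separator together with an unmatchable neighbourhood into a contradiction, the separate handling of triangle-rich graphs versus locally bipartite ones such as $K_{m,m}$, and the explicit verification of the small surfaces — is the technical heart of the proof. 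The constructive half, by contrast, requires only the elementary arithmetic with Ringel's genus formula sketched above, and no modification of $K_{m,m}$.
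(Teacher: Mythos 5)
The paper does not prove this statement; it is quoted from Dean \cite{9}, so your proposal can only be measured against Dean's argument. Your lower bound is essentially correct and complete: $K_{m,m}$ with $m=2+\lfloor\sqrt{4-2\chi(\Sigma)}\rfloor$ is $(m-1)$-extendable, and the arithmetic with Ringel's genus formulas $\lceil (m-2)^2/4\rceil$ and $\lceil (m-2)^2/2\rceil$ does show it embeds in $\Sigma$, giving $\mu(\Sigma)\ge 2+\lfloor\sqrt{4-2\chi(\Sigma)}\rfloor$. This matches the constructive half of Dean's proof.

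The upper bound, however, is not a proof but a plan with the decisive steps missing, and you say so yourself. The vague ``local obstruction'' (build an $n$-matching covering $N_G(v)$ but missing $v$, ``unless $N_G(v)$ is nearly independent or a small separator cuts off the closed neighbourhood'') is never stated precisely, let alone proved, and the two ``escapes'' are not handled: if $N_G(v)$ has too few independent edges you cannot saturate it the way you describe, and waving at ``$K_{m,m}$ is too dense'' does not dispose of that case, since the graph is not assumed to look like $K_{m,m}$ globally. The correct formulation of the obstruction is exactly the lemma this paper quotes as Lemma~\ref{2.1} (a vertex of degree $n+t$ in an $n$-extendable graph has no $t$-matching in its neighbourhood); proving that lemma, and then converting ``no $t$-matching in $G[N(v)]$ for every vertex'' into an edge/triangle count (Erd\H{o}s--Gallai-type bounds on edges of a graph with small matching number, fed into Euler's formula) is what improves your crude estimate $2+\sqrt{9-3\chi}$ to the stated $2+\lfloor\sqrt{4-2\chi}\rfloor$. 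You name this step (``optimising the resulting inequalities, plus a by-hand check of small surfaces'') but do not carry it out, and since the exact constant under the square root is the entire content of the theorem beyond the easy bound, the upper-bound half — and hence the theorem — is not established by your argument.
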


A graph is 1-$planar$ if it can be drawn in the plane so that each edge is crossed by at most one other edge. Unless otherwise stated, we consider that a given 1-planar graph is already drawn on the plane (or sphere). The notion of a 1-planar graph was first introduced by G. Ringel \cite{11}. To now most researches on 1-planar graphs have focused on graph coloring and some structures. For structure of 1-planar graphs, I. Fabrici and T. Madras \cite{14} showed that every 1-planar graph $G$ has at most $4|V(G)|-8$ edges.  A 1-planar graph $G$ is called an {\em optimal 1-planar graph} if it satisfies $|E(G)|=4|V(G)|-8$. They also  got

\begin{lem}[\cite{14}]\label{1.7} Every 1-planar graph contains a vertex of degree at most 7.
\end{lem}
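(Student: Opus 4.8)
The plan is to deduce Lemma~\ref{1.7} directly from the edge bound $|E(G)|\le 4|V(G)|-8$ for $1$-planar graphs by a handshake (degree-sum) count; once that bound is granted, no further analysis of the $1$-planar drawing is needed.

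First I would argue by contradiction: suppose $G$ is a $1$-planar graph with $\delta(G)\ge 8$. A vertex of degree at least $8$ forces $|V(G)|\ge 9$, so the Fabrici--Madaras bound $|E(G)|\le 4|V(G)|-8$ \cite{14} applies. Summing degrees over all vertices gives
\[
2|E(G)|=\sum_{v\in V(G)}d_G(v)\ge 8|V(G)|,
\]
hence $|E(G)|\ge 4|V(G)|>4|V(G)|-8\ge |E(G)|$, a contradiction. Therefore $\delta(G)\le 7$, i.e.\ $G$ contains a vertex of degree at most $7$.

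There is essentially no obstacle once the edge bound is available: the entire content of the lemma is packaged in the inequality $|E(G)|\le 4|V(G)|-8$, which is stated in the excerpt and hence may be assumed. A self-contained proof would instead have to establish that inequality — the standard route being to turn a $1$-planar drawing of $G$ into a plane graph by replacing each crossing point with a new vertex of degree $4$ and then applying Euler's formula together with a bound on the number of crossings — but that is precisely the result we are permitted to take for granted, so the proof reduces to the two-line degree-sum computation above.
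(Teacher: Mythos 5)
Your argument is correct: granting the Fabrici--Madaras bound $|E(G)|\le 4|V(G)|-8$, the degree-sum count immediately rules out $\delta(G)\ge 8$, and your observation that $\delta(G)\ge 8$ forces enough vertices for the bound to apply closes the only small-order loophole. The paper itself gives no proof of Lemma~\ref{1.7} (it is quoted from \cite{14}), and your derivation is exactly the standard one used in that source, so there is nothing further to reconcile.
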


Motivated by Plummer's work it is natural to consider matching extension of 1-planar graphs.  However  J. Fujisawa et al. \cite{16} pointed out that an obstacle for such a research is the fact that the operation of edge contraction does not preserve the 1-planarity in general.  In 2018 they eliminated this difficulty for optimal 1-planar graphs,  and got a series of results on  the matching extendability of optimal 1-planar graphs like the case of planar graphs.

In order to state such results, we need the following notations. An edge in a 1-planar graph $G$ is called crossing if it crosses with another edge, and non-crossing otherwise. A cycle $C$ in a connected graph $G$ is said to be {\em separating} if $G-V(C)$ is  disconnected. 
A separating cycle $C$ of a 1-planar graph is called a {\em barrier cycle} if each edge of $C$ is non-crossing, $G-V(C)$ consists of two odd components which lie in the interior and exterior of  $C$ respectively.

\begin{thm}[\cite{16}]\label{thm1.8} Every optimal 1-planar graph $G$ of even order is 1-extendable.
\end{thm}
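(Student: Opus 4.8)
We outline an approach. An optimal $1$-planar graph $G$ is connected (a one-line edge count) and has at least $8$ vertices, and it trivially contains a $1$-matching, so verifying $1$-extendability reduces to showing that $G-u-v$ has a perfect matching for every edge $uv\in E(G)$, i.e. that every edge of $G$ lies in a perfect matching. The plan is to assume the contrary and force a contradiction through an edge count powered by the extremal identity $|E(G)|=4|V(G)|-8$. So suppose $uv\in E(G)$ and $G-u-v$ has no perfect matching. By Tutte's theorem there is $S\subseteq V(G)\setminus\{u,v\}$ with $o\big((G-u-v)-S\big)>|S|$, and since $|V(G)|$ is even a parity argument upgrades this to $o\big((G-u-v)-S\big)\ge|S|+2$. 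Put $T=S\cup\{u,v\}$, so that $uv\in E(G[T])$, $|T|=|S|+2$, and $G-T$ has components $H_{1},\dots,H_{k}$ of which at least $|T|$ are odd. I would fix such a pair $(uv,T)$ with $|T|$ minimum and invoke the Gallai--Edmonds structure so that every odd component is factor-critical (possibly a single vertex); minimality of $|T|$ then keeps the separation tight, each vertex of $T\setminus\{u,v\}$ sending edges to at least two components.

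The core step is an edge count. Write $m=e_{G}(T,V(G)\setminus T)$. Provided $G[T]$ and each $G[H_{i}]$ has at least three vertices, the Fabrici--Madaras bound \cite{14} gives $|E(G[T])|\le 4|T|-8$ and $|E(G[H_{i}])|\le 4|V(H_{i})|-8$; summing these and using optimality of $G$,
\[
4|V(G)|-8=|E(G)|\le(4|T|-8)+\Big(4\big(|V(G)|-|T|\big)-8k\Big)+m ,
\]
so $m\ge 8k\ge 8|T|$: at least $8|T|$ edges leave $T$. To contradict this I would establish a genuinely smaller upper bound on $m$, and here $1$-planarity enters. Since contraction destroys $1$-planarity, one cannot quotient out the $H_{i}$; instead I would pass to the planarization $G^{\times}$ of the drawing, which for an optimal $1$-planar graph is a $3$-connected plane triangulation on $2|V(G)|-2$ vertices whose $|V(G)|-2$ crossing vertices are independent and of degree $4$ (the crossing vertices lying on an edge that meets $T$ become ordinary subdivision points once $T$ is deleted, and must be tracked carefully). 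In $G^{\times}$ the components of $G-T$ occupy pairwise interior-disjoint regions whose boundary walks run through $T$, so Euler's formula applied region by region --- exactly in the spirit of the planar arguments of Plummer \cite{4} and Dean \cite{9} --- caps the number of edges from $T$ into $\bigcup_{i}V(H_{i})$ well below $8|T|$, the desired contradiction. The degenerate cases in which some $H_{i}$ or $T$ has one or two vertices are handled separately, the $1$-planar bound being replaced by $0$ or $1$; note that $\delta(G)\ge 4$ --- again a consequence of $|E(G)|=4|V(G)|-8$ --- already limits how such small pieces can attach.

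The main obstacle is precisely this upper bound on $m$: the convenient planar device of contracting each component and invoking planarity of the quotient is unavailable, so the estimate must be carried out inside the fixed drawing while simultaneously controlling the original edges, their crossing points, and the regions cut out by $T$. A cleaner alternative I would pursue in parallel rests on the known structural description of optimal $1$-planar graphs as a $3$-connected plane quadrangulation $Q$ with a crossing pair of diagonals inserted into each quadrilateral face: the non-crossing skeleton $Q$ is honestly planar and $3$-connected, so contraction is legitimate there, and because the inserted diagonals only merge components of $Q-T$ and enlarge perfect matchings of $G$, one can analyse $T$ and the components of $Q-T$ with the standard planar toolkit and convert any surviving obstruction into a sparse configuration incompatible with $|E(G)|=4|V(G)|-8$.
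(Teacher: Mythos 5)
This statement is quoted in the paper from Fujisawa, Segawa and Suzuki \cite{16} and is not proved in the paper itself, so the only question is whether your outline would stand on its own; it would not, because its decisive step is missing. After setting $T=S\cup\{u,v\}$ with $k\ge|T|$ odd components, your lower bound $m\ge 8k$ on the edges leaving $T$ uses $|E(G[H_i])|\le 4|V(H_i)|-8$ for every component, which requires $|V(H_i)|\ge 3$ (and $|T|\ge 3$). But in a Tutte violator the odd components are typically single vertices, and for a singleton the correct contribution to the count is $4$, not $8$ (degree considerations with $\delta(G)\ge 6$ improve this only to $6$), so the inequality you aim to contradict degrades to roughly $m\ge 6|T|$ rather than $m\ge 8|T|$. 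Worse, the matching upper bound on $m$ --- the claim that Euler's formula applied region by region in $G^{\times}$ ``caps the number of edges from $T$ well below $8|T|$'' --- is never formulated, let alone proved; you yourself identify it as the main obstacle. This is not a routine verification: the whole reason \cite{16} is nontrivial is that the planar device of contracting each $H_i$ and bounding a bipartite planar quotient is unavailable, and no substitute bound in terms of $|T|$ and $k$ alone (the generic bipartite 1-planar bound gives only $m\le 3|V(G)|-8$) is supplied. So the argument as written does not close.

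The ``cleaner alternative'' via the quadrangulation also has a concrete flaw: $Q(G)$ is a quadrangulation of the sphere and hence bipartite, and the inserted crossing diagonals join vertices inside the same colour class. Consequently one cannot ``analyse $T$ and the components of $Q-T$ with the standard planar toolkit'' as a proxy for matchings in $G-T$: for instance, if $u$ and $v$ lie in the same colour class of $Q$, then $Q-u-v$ has no perfect matching at all, so any perfect matching of $G-u-v$ must use crossing edges essentially, and the reduction you sketch does not control them. The actual proof in \cite{16} exploits the correspondence between optimal 1-planar graphs and 3-connected quadrangulations (the paper's Theorem \ref{3.1}) together with a careful analysis of how a Tutte set and the components of $G-S$ sit in the fixed drawing, which is precisely the machinery your outline defers.
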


\begin{thm}[\cite{16}]\label{thm1.9} An optimal 1-planar graph $G$ of even order is 2-extendable unless $G$ contains a barrier cycle of length 4.
\end{thm}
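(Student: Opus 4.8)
I would treat the statement as a biconditional: the ``only if'' half (a barrier $4$-cycle obstructs $2$-extendability) is immediate, and I would prove the ``if'' half by contradiction. For the easy half: if $C=v_1v_2v_3v_4$ is a barrier cycle of length $4$, then $M=\{v_1v_2,\,v_3v_4\}$ is a $2$-matching of $G$, and a perfect matching of $G$ containing $M$ would restrict to a perfect matching of $G-V(C)$, which is impossible because $G-V(C)$ has an odd component; hence $G$ is not $2$-extendable. For the other half, suppose $G$ is an optimal $1$-planar graph of even order that is \emph{not} $2$-extendable; the goal is to produce a barrier cycle of length $4$. By Theorem~\ref{thm1.8}, $G$ is $1$-extendable, hence has a perfect matching and, by Lemma~\ref{1.1}, is $2$-connected; a trivial edge count rules out $|V(G)|\in\{4,6\}$, so $|V(G)|\ge 8$, and I may use the known structure of optimal $1$-planar graphs --- $G$ is obtained from a $3$-connected plane quadrangulation $Q$ by inserting both diagonals into each face, so the non-crossing edges of $G$ are exactly $E(Q)$, the crossing edges are the inserted diagonals, and $\delta(G)\ge 6$.

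Next I would pass to a Tutte-type obstruction. Fix a $2$-matching $M$ that does not extend; then $G-V(M)$ has no perfect matching, so by the Tutte--Berge formula there is $S'\subseteq V(G)\setminus V(M)$ with $o\!\left(G-V(M)-S'\right)\ge|S'|+2$ (the gap is even by parity, and positive by assumption). Setting $T=V(M)\cup S'$, the set $T$ contains both ends of each edge of $M$ and $o(G-T)\ge|T|-2$; on the other hand, $1$-extendability gives $o(G-W)\le|W|-2$ for every $W$ containing both ends of an edge, so in fact $o(G-T)=|T|-2$. Among all such $T$ I would fix one of minimum size and then show $|T|=4$: if $S'\ne\varnothing$, then deleting a vertex $v\in S'$ merges $v$ with the components of $G-T$ it meets, and a short parity computation combined with the bound $o\!\left(G-(T\setminus\{v\})\right)\le|T|-3$ (again from $1$-extendability, since $T\setminus\{v\}$ still contains an edge of $M$) forces $v$ to be adjacent to at least three odd components of $G-T$; I would then play this ``$\ge 3$ per vertex'' lower bound on the adjacencies between $T$ and the odd components of $G-T$ against the density $|E(G)|=4|V(G)|-8$ and the $1$-planar embedding to rule out $S'\ne\varnothing$. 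The conclusion is that $T=V(M)=\{a,b,c,d\}$ with, say, $ab,cd\in E(G)$, and $G-\{a,b,c,d\}$ has exactly two odd components $O_1,O_2$ (plus, possibly, even ones).

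For the final step I would extract a barrier $4$-cycle from the drawing: the $O_i$ lie in distinct regions of the subdrawing consisting of $a,b,c,d$ and their incident edges, and since every neighbour of each $O_i$ lies in $\{a,b,c,d\}$, a rotation-and-degree analysis at $a,b,c,d$ --- using $\delta(G)\ge 6$, $2$-connectedness, and $1$-planarity --- should show that $\{a,b,c,d\}$ spans a $4$-cycle with all edges non-crossing, with $O_1$ in its interior and $O_2$ in its exterior. That $4$-cycle is then a barrier cycle of length $4$, contradicting the hypothesis.

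The main obstacles, I expect, are the reduction to $|T|=4$ (ruling out $S'\ne\varnothing$, i.e.\ showing that a minimal obstruction to $2$-extendability uses only four vertices) and the final extraction of a genuine non-crossing quadrilateral from an abstract $4$-vertex cut with two odd sides; both require combining the optimal-$1$-planar density and minimum-degree conditions with the topology of the $1$-planar embedding. By contrast, the ``only if'' direction, the Tutte--Berge bookkeeping, and the parity computations should be routine.
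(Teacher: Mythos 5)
First, note that this paper does not prove Theorem~\ref{thm1.9} at all: it is quoted from Fujisawa, Segawa and Suzuki \cite{16}, so there is no in-paper proof to measure your argument against; it has to stand on its own. As it stands, it does not. What you have written is a plan whose two load-bearing steps are exactly the content of the theorem, and you leave both of them as ``main obstacles'': (a) ruling out $S'\neq\varnothing$, i.e.\ showing a minimum Tutte-type obstruction $T$ satisfies $T=V(M)$, and (b) showing that the four vertices of $T$ actually span a cycle all of whose edges are non-crossing, with $G-V(C)$ consisting of \emph{exactly} two odd components (no even components, no third component) separated by $C$ in the drawing. Gesturing at ``density $|E(G)|=4|V(G)|-8$ plus the embedding'' for (a) and a ``rotation-and-degree analysis'' for (b) is not a proof, and there is no evidence this particular route closes; the known proof in \cite{16} leans heavily on the correspondence with $3$-connected quadrangulations (Theorem~\ref{3.1}, \cite{15}) and the resulting structure of non-crossing edges (cf.\ Lemma~\ref{3.2}), which is where the non-crossing $4$-cycle ultimately comes from, rather than on a Tutte--Berge-plus-density scheme.

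There is also a concrete slip in the one piece of bookkeeping you do carry out. If you bound $o\bigl(G-(T\setminus\{v\})\bigr)\le |T|-3$ via $1$-extendability (Theorem~\ref{thm1.8}), then writing $k$ for the number of odd components of $G-T$ adjacent to $v$, the merge computation gives $o\bigl(G-(T\setminus\{v\})\bigr)\ge |T|-2-k+\varepsilon$ with $\varepsilon=1$ when $k$ is even, so you only obtain $k\ge 1$ (or $k\ge 2$ when $k$ is even), not the ``at least three odd components'' you intend to exploit. To get $k\ge 3$ you must instead use minimality of $|T|$ among violating sets containing $V(M)$, where parity forces $o\bigl(G-(T\setminus\{v\})\bigr)\le |T|-5$. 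The ``only if'' half and the initial Tutte--Berge setup are fine, but those were never in doubt; the theorem's substance is missing.
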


\begin{thm}[\cite{16}]\label{thm1.13} Let $G$ be a 5-connected optimal 1-planar graph of even order and $M$ be a matching of $G$ with $|M|=3$. Then $M$ is extendable unless $G$ contains a barrier cycle $C$ of length 6 such that $V(M)=V(C)$.
\end{thm}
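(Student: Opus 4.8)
The plan is to convert the non-extendability of $M$ into a Tutte-type obstruction, bound the size of that obstruction using $5$-connectivity together with the planar substructure of an optimal $1$-planar graph, and then read off a barrier $6$-cycle from the resulting extremal configuration; this mirrors the strategy behind Theorem~\ref{thm1.9}, one level up. Concretely, suppose $M=\{a_1b_1,a_2b_2,a_3b_3\}$ is not extendable. By Theorem~\ref{thm1.8} $G$ has a perfect matching, and since $|V(G)|$ is even, $M$ extends iff $H:=G-V(M)$ has a perfect matching; so $H$ has none. Writing $o(\cdot)$ for the number of odd components and applying the Gallai--Edmonds Structure Theorem to $H$, set $A:=A(H)$ and $B:=V(M)\cup A$. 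Then $G-B=H-A$ is the disjoint union of the components of $H[D(H)]$ --- of which there are $|A|+d$, all factor-critical, where $d:=\operatorname{def}(H)\ge 2$ is even --- together with the even, perfectly matchable components of $H[C(H)]$. Hence $o(G-B)=|A|+d=|B|-6+d\ge|B|-4$, and $N_G(K)\subseteq B$ for every component $K$ of $G-B$; since $G-B$ has at least two components and $G$ is $5$-connected, $|B|\ge 5$ and $|N_G(K)|\ge 5$ for each $K$.

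The heart of the matter is to show $|B|=6$. Here the usual planar argument --- contract each component of $G-B$, then invoke the bound $2v-4$ for a simple bipartite plane graph --- is unavailable, precisely because contraction does not preserve $1$-planarity. Instead I would planarize and argue inside a genuine plane graph associated with $G$: the plane quadrangulation $Q$ on $V(G)$ all of whose faces carry the crossing pairs (so $|E(Q)|=2|V(G)|-4$), or the plane triangulation $G^{\times}$ obtained by turning each crossing into a new vertex of degree $4$. The structural facts to exploit are that the four endpoints of each crossing pair span a $K_4$ in $G$, that two components of $Q-B$ can merge in $G-B$ only through a diagonal whose host face of $Q$ has both of its opposite corners in $B$, and that in $G^{\times}$ every component of $G^{\times}-B$ that meets $V(G)\setminus B$ still has at least $5$ neighbours in $B$. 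Combining these with $o(G-B)\ge|B|-4$ via Euler's formula for the plane graph obtained from $G^{\times}-B$ by contracting components (and discarding isolated crossing-vertices) should force $|B|\le 6$, hence $|B|=6$. It should then follow that $A=\emptyset$, so $B=V(M)$, that $d=2$, that $G-V(M)$ has no even component, and that $G-V(M)$ consists of exactly two odd components $K_1,K_2$ with $N_G(K_1)=N_G(K_2)=V(M)$.

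It then remains to produce the barrier cycle. With $V(M)=\{a_1,b_1,a_2,b_2,a_3,b_3\}$ of size six and $G-V(M)=K_1\cup K_2$ (both odd, each with neighbourhood exactly $V(M)$), in the drawing $K_1$ and $K_2$ occupy complementary regions whose common boundary passes only through $V(M)$; since $|N_G(K_i)|=|V(M)|=6$ for both, this boundary is forced to be a single $6$-cycle $C$ with $V(C)=V(M)$, with $K_1$ in its interior and $K_2$ in its exterior. No edge of $C$ is crossed, since the partner of a crossed edge of $C$ would either join $K_1$ to $K_2$ or run from some $K_i$ to a vertex outside $V(M)$, both impossible. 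Thus $C$ is a barrier cycle of length $6$ with $V(C)=V(M)$, which is the one excluded case, a contradiction; so $M$ is extendable. I expect the middle step --- bounding $|B|$ --- to carry essentially all the difficulty: the contraction device that trivialises the planar case fails, so one must work in a planarization and track how the diagonals (equivalently, the $K_4$'s) interact with the components of $G-B$, and the resulting estimate must be sharp enough to give $|B|\le 6$ exactly while also killing the even components and any third odd component. The final barrier-cycle extraction is comparatively routine once that extremal structure is nailed down.
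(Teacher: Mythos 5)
The paper does not actually prove this statement; it is quoted from Fujisawa--Segawa--Suzuki \cite{16}, so there is no internal proof to compare against, and your proposal must stand on its own. Its skeleton is sound and standard: reducing non-extendability of $M$ to a Gallai--Edmonds/Tutte obstruction $B\supseteq V(M)$ with $o(G-B)\ge |B|-4$, and using $5$-connectivity to give every component of $G-B$ at least five neighbours in $B$, is correct. But the decisive step is missing rather than merely compressed: you never carry out the count that is supposed to force $|B|\le 6$, and the two planarizations you offer each run into concrete obstacles your sketch does not address. In $G^{\times}$, deleting $B$ can leave a false vertex joining two \emph{different} components of $G-B$ (two edges that cross each other, each with its other end in $B$), so the components of $G^{\times}-B$ do not refine those of $G-B$, and contracting them undercounts $o(G-B)$. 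In $Q(G)$, a component of $G-B$ may reach some of its five $G$-neighbours in $B$ only through diagonals, so the per-component degree bound $\ge 5$ that the Euler-formula count needs is not available in the plane graph where Euler's formula is applied. "Should force $|B|\le 6$" is therefore a conjecture, not an argument; and even granting $|B|=6$, you still owe the proof that $G-V(M)$ has exactly two components, both odd, each with neighbourhood equal to all of $V(M)$ -- this again needs a counting argument in the planarization, which you only assert "should then follow."

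The endgame also has a gap. That the six vertices of $V(M)$ actually induce a $6$-cycle in $G$ forming the common boundary of the regions containing $K_1$ and $K_2$ is a genuine structural claim about the drawing (boundaries between regions in a $1$-planar drawing are not automatically cycles of edges of $G$), and it needs the quadrangulation structure to be established. Your argument that no edge of $C$ is crossed is incomplete: the crossing partner of an edge of $C$ could have both endpoints in $V(M)$ (a chord of $C$), a case not covered by your dichotomy "joins $K_1$ to $K_2$ or runs from some $K_i$ to a vertex outside $V(M)$." So the proposal is a reasonable plan in the spirit of \cite{16}, but as a proof it has a genuine gap at its self-identified centre and a smaller one at the end.
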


Theorem \ref{thm1.9} implies that  a 5-connected optimal 1-planar graph $G$ of even order is 2-extendable since it cannot contain a barrier cycle of length 4.
In this paper, we  get the following result for any optimal 1-planar graphs.

\begin{thm}\label{thm1.15} No optimal 1-planar graph is 3-extendable.
\end{thm}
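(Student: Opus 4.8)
The plan is to exhibit, in an arbitrary optimal $1$-planar graph $G$, one explicit $3$-matching that cannot be extended to a perfect matching; this immediately shows $G$ is not $3$-extendable. The tool is the structural description of optimal $1$-planar graphs already exploited by Fujisawa et al.~\cite{16}: $G$ arises from a $3$-connected plane quadrangulation $Q$ on the vertex set $V(G)$ by inserting into each quadrilateral face $v_1v_2v_3v_4$ of $Q$ the crossing pair of chords $v_1v_3,v_2v_4$. Thus the non-crossing edges of $G$ are exactly the edges of $Q$, and $Q$, being a $3$-connected plane quadrangulation, is bipartite and in particular triangle-free.

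First I would pick a vertex $w$ with $d_Q(w)=3$: since $\sum_{v\in V(Q)}d_Q(v)=2|E(Q)|=4|V(Q)|-8$, the average degree of $Q$ is below $4$, while $3$-connectivity forces $\delta(Q)\ge 3$. Write $N_Q(w)=\{a,b,c\}$ and let $F_1=waxb$, $F_2=wbyc$, $F_3=wcza$ be the three quadrilateral faces of $Q$ around $w$ (distinct $4$-cycles, as $Q$ is a $3$-connected simple quadrangulation). The crucial local claim is that $a,b,c,x,y,z$ are six distinct vertices, so that $C:=axbycz$ is a genuine $6$-cycle of $Q$. Indeed, if two of $x,y,z$ coincided then their common neighbour among $a,b,c$ would end up with degree $2$ in $Q$; and each of $x,y,z$ is adjacent to two of $a,b,c$, so it is distinct from them unless some identification such as $x=c$ occurs, which would create the triangle $wac$ --- impossible in the bipartite graph $Q$.

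Next, the three faces $F_1,F_2,F_3$ tile the closed disc bounded by $C$, and $w$ is their only interior vertex, so in $G$ the chords incident to $w$ are precisely the diagonals $wx$ of $F_1$, $wy$ of $F_2$, $wz$ of $F_3$; hence $N_G(w)=\{a,b,c,x,y,z\}=V(C)$. Now set $M:=\{ax,by,cz\}$, the three alternate edges of $C$; this is a $3$-matching of $G$, and $G-V(M)=G-V(C)$ has $w$ as an isolated vertex, hence as an odd component, so $G-V(M)$ admits no perfect matching and $M$ does not extend. Since $|V(G)|\ge 8=2\cdot 3+2$, this proves $G$ is not $3$-extendable.

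I expect the only genuine obstacle to be having the structure theorem cleanly in hand in the displayed form; once it is, the proof is a local inspection around a degree-$3$ vertex of $Q$, with no case analysis on connectivity. Two minor points: if the structural result one cites is stated only for large order, the finitely many optimal $1$-planar graphs on $8$ vertices must be checked by hand --- for the ``cube plus all face diagonals'' graph, the matching $M$ above leaves exactly the two antipodal (non-adjacent) vertices of the cube as the two odd components. And when $G$ has even order and $G-V(C)$ has only two components, $C$ is a barrier cycle of length $6$, so for $5$-connected $G$ the conclusion can alternatively be read off from Theorem~\ref{thm1.13}; the direct argument above simply avoids that detour.
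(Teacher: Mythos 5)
Your proposal is correct, and its heart coincides with the paper's proof: around a vertex of degree $6$ in $G$ (equivalently, degree $3$ in the quadrangulation $Q=Q(G)$) the neighbourhood forms a $6$-cycle of $Q$, and the three alternate edges of that cycle give a $3$-matching whose removal isolates the vertex, so it cannot extend to a perfect matching. Where you differ is in how the low-degree vertex is produced. The paper argues by contradiction: assuming $3$-extendability it invokes Lemma \ref{1.1} to get $4$-connectivity, Lemma \ref{1.7} to get a vertex of degree at most $7$, and Lemma \ref{3.2} to force the degree to be $4$ or $6$, then kills the degree-$4$ case via a multiple-edge argument. You instead work entirely inside $Q$: since $|E(Q)|=2|V(Q)|-4$ the average degree is below $4$, while $3$-connectivity of $Q$ (from Theorem \ref{3.1} and its surrounding structure theory) gives $\delta(Q)\geq 3$, so a degree-$3$ vertex of $Q$ exists outright. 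This buys a direct statement — every optimal $1$-planar graph contains a non-extendable $3$-matching — with no appeal to Lemma \ref{1.7}, Lemma \ref{1.1}, Dean's Lemma \ref{2.1}, or a separate degree-$4$ case; the price is that you must (and do, correctly, via bipartiteness and $\delta(Q)\geq3$) verify that the six vertices $a,b,c,x,y,z$ are distinct, a point that is automatic in the paper's framing because they are the six neighbours of a degree-$6$ vertex of the simple graph $G$. Both arguments rest on the same structural fact, namely that $G$ is obtained from the $3$-connected quadrangulation $Q(G)$ by inserting both diagonals into every face, so your reliance on that fact is no heavier than the paper's.
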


By Theorems \ref{thm1.13} and  \ref{thm1.15}, we can obtain the following corollary.

\begin{cor}\label{cor1.16} Any 5-connected optimal 1-planar graph $G$ of even order contains a barrier cycle $C$ of length 6.
\end{cor}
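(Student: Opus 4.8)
The plan is to deduce the statement by feeding Theorem~\ref{thm1.15} into Theorem~\ref{thm1.13}. Fix a $5$-connected optimal $1$-planar graph $G$ of even order. First I would pin down the order: $5$-connectivity gives $\delta(G)\ge 5$, hence $|V(G)|\ge 6$; but $|V(G)|=6$ together with optimality would force $|E(G)|=4\cdot 6-8=16>\binom{6}{2}$, which is absurd, so the even-order hypothesis yields $|V(G)|\ge 8$. In particular $G$ has at least $2\cdot 3+2$ vertices, so $3$-extendability is a meaningful notion for $G$.

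Next I would verify that $G$ actually carries a $3$-matching. One clean way is to invoke Theorem~\ref{thm1.8}: $G$ is $1$-extendable, hence has a perfect matching $N$ with $|N|=|V(G)|/2\ge 4$, and any three edges of $N$ constitute a $3$-matching. (Alternatively one can build a $3$-matching greedily using $\delta(G)\ge 5$ and $|V(G)|\ge 8$, since deleting four vertices from a $5$-connected graph leaves a connected graph on at least $4$ vertices.)

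The core of the argument is then a short piece of logic. By Theorem~\ref{thm1.15}, $G$ is not $3$-extendable. By the definition of $n$-extendability, a connected graph on at least $2n+2$ vertices fails to be $n$-extendable exactly when either it has no $n$-matching or some $n$-matching of it lies in no perfect matching. Since the previous paragraph rules out the first possibility, there must exist a $3$-matching $M$ of $G$ that does not extend to a perfect matching. Now apply Theorem~\ref{thm1.13} to the $5$-connected optimal $1$-planar graph $G$ of even order and this non-extendable $3$-matching $M$: the theorem forces $G$ to contain a barrier cycle $C$ of length $6$ with $V(C)=V(M)$. In particular $G$ contains a barrier cycle of length $6$, which is the assertion.

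There is no genuine obstacle here beyond the bookkeeping: the only point requiring care is confirming that $|V(G)|\ge 8$ and that $G$ does possess a $3$-matching, so that ``not $3$-extendable'' really hands us a \emph{non-extendable} $3$-matching to plug into Theorem~\ref{thm1.13}, rather than holding for the trivial reason that no $3$-matching exists. Once these hypotheses are aligned, the corollary is essentially immediate from Theorems~\ref{thm1.15} and~\ref{thm1.13}.
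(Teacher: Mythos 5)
Your proof is correct and follows essentially the same route as the paper, which derives the corollary directly by combining Theorem~\ref{thm1.15} (no optimal 1-planar graph is 3-extendable) with Theorem~\ref{thm1.13}. Your extra bookkeeping (showing $|V(G)|\ge 8$ and using Theorem~\ref{thm1.8} to guarantee a 3-matching, so that non-3-extendability really yields a non-extendable 3-matching) is a sound and welcome filling-in of details the paper leaves implicit.
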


Next we mainly  consider the maximum matching extendability of 1-planar graphs. Since $n$-extendable graphs are $(n+1)$-connected, Lemma \ref{1.7} implies that no 1-planar graph is 7-extendable. However we  prove that no 1-planar graph is 5-extendable by combining the discharge method and Dean's lemma as follows. We construct a 4-extendable 1-planar graph (see Fig. \ref{p10}), which shows that such result is best possible.

\begin{thm}\label{thm1.14} No 1-planar graph is 5-extendable.
\end{thm}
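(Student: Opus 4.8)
The plan is to argue by contradiction. Suppose $G$ is a $5$-extendable $1$-planar graph, and fix a $1$-planar drawing $D$ of $G$. By Lemma~\ref{1.1}, $G$ is $6$-connected, so $\delta(G)\ge 6$, the order of $G$ is even, and $|V(G)|\ge 12$; by Lemma~\ref{1.7}, $G$ has a vertex $w$ with $d_G(w)\in\{6,7\}$. The aim is to produce a $5$-matching $M$ of $G$ with $w\notin V(M)$ and $N_G(w)\subseteq V(M)$; then $w$ is isolated in $G-V(M)$, so $G-V(M)$ has no perfect matching, $M$ is non-extendable, and this contradicts $5$-extendability. Since $|V(M)|=10\ge d_G(w)$, such an $M$ exists as soon as (i) $G[N_G(w)]$ has a matching saturating $N_G(w)$ when $d_G(w)=6$, or saturating all but one vertex of $N_G(w)$ when $d_G(w)=7$; and (ii) the remaining edges of $M$ can be chosen among the independent edges of $G-N_G[w]$, together with, in the odd case, one edge joining the uncovered neighbour of $w$ to $V(G)\setminus N_G[w]$.

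Condition (ii) is the soft part: since $G$ is $6$-connected on at least $12$ vertices, after deleting the at most $8$ vertices already used either the leftover graph still contains the two or three additional independent edges we need, or these few deleted vertices form a cut of $G$ whose removal leaves several odd components --- the isolated vertex $w$ and the vertices cut off --- which already violates Tutte's condition for matchings that $5$-extendability must extend. Making this dichotomy precise, and treating the uncovered neighbour in the degree-$7$ case, is the sort of bookkeeping packaged in Dean's lemma \cite{9}, which I would invoke here. Thus everything reduces to the structural claim: \emph{every $6$-connected $1$-planar graph has a vertex $w$ with $d_G(w)\in\{6,7\}$ such that $G[N_G(w)]$ has a matching saturating $N_G(w)$, or all but one vertex of $N_G(w)$.}

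This structural claim is proved by discharging in $D$. Pass to the planarization $D^{\times}$ (each crossing becomes a new vertex of degree $4$); assign charge $d(v)-4$ to each $v\in V(G)$, charge $0$ to each crossing vertex, and charge $|\phi|-4$ to each face $\phi$. Euler's formula for $D^{\times}$ makes the total charge $-8$. The vertices of $G$ carry charge $\ge\delta(G)-4\ge 2$ and the crossing vertices carry $0$, so all negative charge sits on triangular faces of $D^{\times}$, which must be numerous; moreover $1$-planarity forbids a triangular face of $D^{\times}$ from carrying two crossing vertices, so every such face meets $V(G)$. Sending a suitable fraction of charge from each vertex of $G$ to its incident triangular faces, some vertex $w\in V(G)$ is pushed below its reserve of $\delta(G)-4$, forcing $d_G(w)\in\{6,7\}$ and forcing $w$ to be incident with almost as many triangular faces of $D^{\times}$ as its degree. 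Translating these faces around $w$ --- a crossing-free triangular face is a genuine triangle $wv_iv_j$ and gives the edge $v_iv_j$ of $G[N_G(w)]$, while a triangular face with a crossing vertex forces, by the one-crossing rule, a short local configuration among consecutive neighbours of $w$ (typically an ``X'' whose crossing edge joins the two neighbours flanking the crossed edge at $w$) that again contributes edges inside $N_G(w)$ --- yields enough edges, spread around the rotation of $w$, to assemble the desired (near-)perfect matching of $G[N_G(w)]$.

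The main obstacle is this last translation inside the discharging step: a triangular face of $D^{\times}$ at $w$ does not by itself deliver an edge of $G$ between two neighbours of $w$ when one of its sides is half of a crossed edge, so the discharging weights must be tuned to the possible crossing patterns, and one must then check that the edges one does extract among the neighbours of $w$ sit around the rotation of $w$ so as to admit a matching of size $\lceil d_G(w)/2\rceil$, respectively one less, rather than clustering on a single arc. Carrying out this configuration analysis for vertices of degree $6$ and $7$ --- enumerating the admissible one-crossing patterns and verifying matchability in each --- is the crux; the rest is the routine bookkeeping sketched above.
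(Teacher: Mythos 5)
Your overall strategy (pass to the planarization, discharge, and use the neighbourhood-matching obstruction at a low-degree vertex) is the same one the paper uses, but as written the argument has two genuine gaps. First, your reduction is both unjustified and stronger than necessary. The hand construction of a non-extendable $5$-matching requires covering all of $N_G(w)$, which forces your structural claim that some vertex $w$ of degree $6$ or $7$ has a matching in $G[N_G(w)]$ saturating $N_G(w)$ (or all but one vertex), i.e.\ three independent edges among the neighbours; and your ``condition (ii)'' (completing the matching outside $N_G[w]$) is only waved at via a Tutte-type dichotomy that is not actually carried out. Dean's lemma (Lemma~\ref{2.1}) is exactly the tool that makes all of this unnecessary, but it gives something much weaker and should be invoked in that weaker form: for a $5$-extendable graph, a vertex of degree $6$ has \emph{no edge at all} inside its neighbourhood, and a vertex of degree $7$ has no $2$-matching there. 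So the correct target is only: some $6$-vertex has one edge in its neighbourhood, or some $7$-vertex has a $2$-matching there. Your stronger ``saturating'' claim is not what any discharging of this kind delivers, and it is not clear it is even true; insisting on it makes the plan unworkable as stated.

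Second, and more importantly, the discharging step --- which is the entire content of the proof --- is left as ``the crux.'' The paper's proof consists precisely of the configuration analysis you defer: showing that (under the assumption that no $6$-vertex has an edge in its neighbourhood and no $7$-vertex has a $2$-matching there) a $6$-vertex is incident with no true $3$-face and at most four false $3$-faces, and a $7$-vertex with at most six false $3$-faces, at most two (necessarily adjacent) true $3$-faces, and suitably few false $3$-faces when true $3$-faces are present (Claims~1--4, proved via the alternation of true and false vertices around $w$ as in Lemma~\ref{2.2}); then choosing explicit weights (the paper uses $3d-10$ and $2d-10$ with transfers $7/4$, $4/3$, $1/2$) and verifying nonnegativity case by case against the total $-20$. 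Note also that the contradiction is global: one assumes the Dean obstruction at \emph{every} vertex of degree $6$ and $7$ simultaneously and shows the charges cannot all be nonnegative; no single distinguished vertex $w$ with a rich neighbourhood is ever exhibited. Your sketch (charges $d(v)-4$, $0$ on crossing vertices, $|\phi|-4$ on faces) could plausibly be tuned to work, but without the analogue of Claims~1--4 and the arithmetic verification, the proposal is a plan rather than a proof.
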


The remaining sections of this paper are organized   as follows. In Section 2, we  give the proof of Theorem \ref{thm1.15}  via Suzuki's relation between optimal 1-planar graphs and quadrangulations on the sphere. In Section 3 we use  Dean's lemma and  discharging method to prove Theorem \ref{thm1.14}. At the end of this section, we give a 4-extendable 1-planar graph. In Section 4, as remarks we show that no 1-planar graph is 7-factor-critical and no optimal 1-planar graph is 6-factor-critical. Some examples show such results are best possible.


\section{Proof  of Theorem \ref{thm1.15}}

A quadrangulation of the sphere is a simple graph embedded on the sphere with no crossing point such that each of its faces is bounded by a cycle of length 4. If we remove all the crossing edges of an optimal 1-planar graph $G$, then the resulting graph is  denoted by $Q(G)$.  To prove Theorem \ref{thm1.15}, we first give a clear relationship between optimal 1-planar graphs and quadrangulations on the sphere as follows (see Theorem 11 in \cite{15}).

\begin{thm}[\cite{15}]\label{3.1} Let $H$ be a simple quadrangulation on the sphere. Then there
exists a simple optimal 1-planar graph $G$ such that $H = Q(G)$ if and only if $H$ is
3-connected.
\end{thm}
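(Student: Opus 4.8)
The plan is to show that the construction \emph{add both diagonals to every quadrilateral face} is the only possible way to pass from the quadrangulation $H$ to a candidate optimal $1$-planar graph, and then to characterize exactly when this construction stays simple. This turns the biconditional in Theorem \ref{3.1} into a purely combinatorial statement about quadrangulations.

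First I would pin down the rigid structure. Suppose $G$ is a simple optimal $1$-planar graph drawn on the sphere with $Q(G)=H$, and write $n=|V(H)|$. Euler's formula for the quadrangulation gives $|E(H)|=2n-4$ and $n-2$ faces, so $G$ has exactly $(4n-8)-(2n-4)=2(n-2)$ crossing edges. Since each crossing edge of $G$ is crossed by another crossing edge, and an edge of $H$ is by definition non-crossing, no crossing edge meets an edge of $H$; hence every crossing edge lies inside a single face of $H$. As a face is a $4$-cycle $xayb$, the only chords available there are the two diagonals $xy$ and $ab$, and a lone diagonal could not be crossed, so each face carries either $0$ or $2$ diagonals. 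With $2(n-2)$ crossing edges spread over $n-2$ faces this forces \emph{every} face to receive both diagonals, crossing once. Thus $G$ is forced to equal $H$ plus all face-diagonals, and conversely any such $G$ is $1$-planar with $|E(G)|=4n-8$ and $Q(G)=H$. This reduces the theorem to the single question: for which $H$ is $H$-plus-all-diagonals simple?

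The decisive simplification is bipartiteness. A spherical quadrangulation is bipartite, and the two endpoints of any face-diagonal lie in the same part, so a diagonal can never duplicate an edge of $H$. Hence the construction fails to be simple \emph{iff} two distinct faces share the same diagonal pair, i.e. some two vertices $x,y$ are opposite in two distinct faces; call this a \emph{double diamond}. The heart of the matter is then the lemma: a simple spherical quadrangulation is $3$-connected if and only if it has no double diamond. Granting this, sufficiency is immediate (a $3$-connected $H$ has no double diamond, so $G=H+\text{diagonals}$ is a simple optimal $1$-planar graph with $Q(G)=H$), and necessity follows by contraposition (a non-$3$-connected $H$ has a double diamond, which repeats an edge, so no simple $G$ exists).

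It remains to prove the lemma, and this is where I expect the real work. A simple quadrangulation is automatically $2$-connected, since a connected plane graph whose faces are all bounded by cycles has no cut vertex. For \emph{double diamond $\Rightarrow$ not $3$-connected}, I would place $x,y$ at the poles; the length-two paths $xay$, $xby$, $xcy$, $xdy$ through the common neighbours cut the sphere into regions in which the two diamond faces sit as \emph{empty} faces, pinching the remaining graph so that every path between the two surviving regions must run through $x$ or $y$; thus $\{x,y\}$ is a $2$-cut. For the converse, I would take a $2$-cut $\{u,w\}$ and invoke the standard fact that in a $2$-connected plane graph a minimal separating curve for $\{u,w\}$ passes through exactly two faces $f_1,f_2$, each incident to both $u$ and $w$; on a $4$-cycle two such vertices are adjacent or opposite, and the adjacent case forces the edge $uw$ together with a curve that does not actually separate, so $u,w$ are opposite on both $f_1$ and $f_2$, a double diamond. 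The main obstacle is making these two topological steps fully rigorous: controlling the empty-face and region structure in the first direction, and justifying the ``two faces, opposite vertices'' description of a $2$-cut in the second (including the parity point that the cut vertices must lie in the same part, which is what licenses ``opposite'' rather than ``adjacent''). Once the lemma is in place the theorem is immediate.
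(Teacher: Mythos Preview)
The paper does not prove this theorem at all: it is quoted verbatim from Suzuki~\cite{15} (stated there as Theorem~11) and used as a black box to derive Lemma~\ref{3.2}. There is therefore no ``paper's own proof'' to compare your proposal against.

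That said, your reconstruction of Suzuki's result is largely on the right track. The counting argument forcing both diagonals into every face is clean and correct, as is the bipartiteness observation ruling out a diagonal duplicating an edge of $H$. The reduction to the ``double diamond'' lemma is the right move. Where you are still sketchy is exactly where you flag it: in the converse direction of the lemma you assert that a $2$-cut $\{u,w\}$ in a $2$-connected quadrangulation must have $u,w$ opposite on two faces, invoking a ``standard fact'' about minimal separating curves and a parity claim that $u,w$ lie in the same colour class. Neither of these is quite free. The separating-curve statement needs the $2$-connectedness to guarantee the curve meets the graph only at $u$ and $w$ and passes through exactly two faces, and the parity argument requires you to rule out the case where $u,w$ are adjacent on both faces (which would force a multi-edge or a degree-$2$ vertex between two faces, contradicting simplicity and $2$-connectedness of the quadrangulation). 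These are fixable, but as written they are assertions rather than arguments, so if you intend this as a self-contained proof you should spell them out.
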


From Theorem \ref{3.1}, adding two diagonal edges within every face of a 3-connected quadrangulation on the sphere results in an optimal 1-planar graph. As an immediate consequence  we have the following result.

\begin{lem}\label{3.2} Let $G$ be an optimal 1-planar graph. Then for any $v\in V(G)$, all edges incident with $v$ in $G$ are alternately crossing and non-crossing edges in clockwise way. So the degree of each vertex in $G$ is even.
\end{lem}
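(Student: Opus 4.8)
The plan is to read the statement directly off the structure recalled just above. By Theorem~\ref{3.1} and the discussion following it, for an optimal $1$-planar graph $G$ the graph $Q(G)$ obtained by deleting every crossing edge of $G$ is a $3$-connected quadrangulation of the sphere, and $G$ is recovered from $Q(G)$ by inserting into each (quadrilateral) face its two diagonals. Hence the non-crossing edges of $G$ are precisely the edges of $Q(G)$, while the crossing edges of $G$ are precisely these inserted diagonals. This reformulation — ``$G=Q(G)$ together with both diagonals of every face of the quadrangulation $Q(G)$'' — is the only non-local ingredient, and it is exactly what the preceding section supplies.

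With this in hand the argument is purely local. Fix $v\in V(G)$ and consider the rotation at $v$, i.e.\ the cyclic clockwise order of the edges of $G$ incident with $v$. First I would note that the edges of $Q(G)$ at $v$ split this rotation into consecutive sectors, one for each face of $Q(G)$ incident with $v$. Since $Q(G)$ is a quadrangulation, such a face is a $4$-cycle $vxyzv$, where $x,z$ are the neighbours of $v$ on that face and $y$ is the vertex of the face opposite to $v$. Of the two diagonals inserted into this face, namely $vy$ and $xz$, only $vy$ meets $v$, and $vy$ is drawn inside the face, hence inside the sector between the two $Q(G)$-edges $vx$ and $vz$. Therefore each such sector contains exactly one crossing edge at $v$, and conversely every crossing edge at $v$ arises in this way.

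Consequently, scanning the rotation at $v$ clockwise, the edges alternate non-crossing, crossing, non-crossing, crossing, \dots, and the pattern closes up around $v$. Hence $v$ is incident with equally many crossing and non-crossing edges, so $d_G(v)=2\,d_{Q(G)}(v)$ is even, which proves both assertions. I do not expect a genuine obstacle: once the correspondence of Theorem~\ref{3.1} is invoked in the form above, the alternation and the parity are immediate, and the only point requiring care is verifying that each face of $Q(G)$ incident with $v$ contributes exactly one diagonal at $v$, which is precisely the elementary observation made above.
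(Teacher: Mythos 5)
Your proof is correct and takes essentially the same route as the paper: the paper derives Lemma~\ref{3.2} as an immediate consequence of Theorem~\ref{3.1}, viewing $G$ as the $3$-connected quadrangulation $Q(G)$ with both diagonals inserted into every quadrilateral face, and your argument merely spells out the local alternation at $v$ (one diagonal per incident face sector) that the paper leaves implicit.
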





\noindent\textbf{Proof of Theorem \ref{thm1.15}}~~Suppose to the contrary that there exists a 3-extendable optimal 1-planar graph $G$. By Lemma \ref{1.1}  $G$ is 4-connected. So  $G$ has  a vertex $v$ such that $4\leq {d}_{G}(v)\leq 7 $ by Lemma \ref{1.7}.

By Lemma \ref{3.2}  ${d}_{G}(v)$ is even, so ${d}_{G}(v)=4$ or 6. If ${d}_{G}(v)=4$, then by Lemma \ref{3.2}, $v$ has exactly two neighbors $v_{1}$ and $v_{3}$ in $Q(G)$ so that $G$ has one diagonal edge between $ v_{1}$ and $v_{3}$ in each 4-face on two sides of 3-path $v_1vv_{3}$.  Two edges joining $v_1$ and  $v_3$ clearly would become multiple edges,  contradicting that $G$ is simple.

If ${d}_{G}(v)=6$, let $vv_{i}$,  $1\leq i\leq 6$, be the consecutive edges incident with $v$ in counterclockwise order. Then by Lemma \ref{3.2}, without loss of generality suppose that $vv_{1}, vv_{3}$ and $vv_{5}$ are non-crossing edges, and $vv_{2}, vv_{4}$ and $vv_{6}$ are crossing edges (see Fig \ref{p29}). Further, $v$ is incident with exactly three 4-faces of $Q(G)$ so that their face cycles are $vv_1v_2v_3v$, $vv_3v_4v_5v$ and $vv_5v_6v_1v$, which implies that  $Q(G)$ has a 6-cycle $v_1v_2v_3v_4v_5v_6v_1$. Hence $G$ has  a 3-matching  $\{v_{1}v_{2},~v_{3}v_{4},~v_{5}v_{6}\}$ covering all neighbors of $v$,  which is    not extendable to a perfect matching of $G$, contradicting Lemma \ref{2.1} or the definition of 3-extendable graphs. Therefore, no optimal 1-planar graph is 3-extendable.

\qed
\begin{figure}[htb]
\centering
\includegraphics[height=4.5cm,width=4cm]{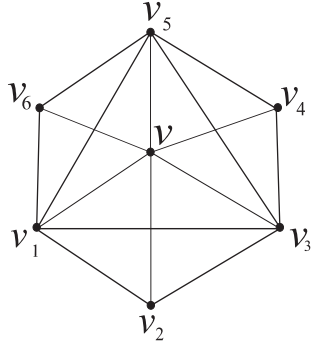}
\caption{\label{p29}A 6-vertex in an optimal 1-planar graph.}
\end{figure}

\section{Proof of Theorem \ref{thm1.14}}

To obtain our main result in this section  we first give some preliminaries. The following lemma  due to N. Dean \cite{9} will play  a crucial role in the proof of Theorem \ref{thm1.14}. For a graph $G$, we use $N(v)$ to denote the neighborhood of a vertex $v$ and $G[N(v)]$ for the induced subgraph of $G$ by $N(v)$.

\begin{lem}[\cite{9}]\label{2.1} Let $v$ be a vertex of degree $n+t$ in an $n$-extendable graph $G$. Then $G[N(v)]$ does not contain a matching of size $t$.
\end{lem}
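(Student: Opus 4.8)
The plan is to argue by contradiction: suppose $G$ is $n$-extendable, $d_G(v)=n+t$, and $G[N(v)]$ contains a matching $M$ of size $t$. Since a $t$-matching occupies $2t$ vertices of $N(v)$ and $|N(v)|=n+t$, necessarily $t\le n$. If $t=n$ then $V(M)=N(v)$, so $M$ is itself an $n$-matching with $v\notin V(M)$ and $N(v)\subseteq V(M)$; hence $v$ is isolated in $G-V(M)$, so $M$ lies in no perfect matching, contradicting $n$-extendability. So from now on assume $t<n$. The target for this main case is a matching $N$ with $|N|\le n$, $v\notin V(N)$ and $N(v)\subseteq V(N)$: once such an $N$ is in hand, $v$ is isolated in $G-V(N)$, so $G-V(N)$ has no perfect matching, which contradicts the fact that $G$ — being $|N|$-extendable, as $|N|\le n$ — must extend $N$.

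To build $N$, I would take a maximum matching $M^*$ of $G[N(v)]$; its size $\nu_0$ satisfies $t\le\nu_0\le\lfloor(n+t)/2\rfloor$. By maximality the uncovered set $C^*:=N(v)\setminus V(M^*)$ is independent in $G$, so in any matching of $G-v$ that extends $M^*$ and saturates $N(v)$, each vertex of $C^*$ must be matched to a vertex of $V(G)\setminus N[v]$. Thus it suffices to find a matching saturating $C^*$ all of whose edges run from $C^*$ to $V(G)\setminus N[v]$; adjoining it to $M^*$ gives $N$ with $|N|=\nu_0+|C^*|=|N(v)|-\nu_0\le (n+t)-t=n$, as required. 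To produce such a matching I would use $n$-extendability once: since $\nu_0\le n$, $M^*$ extends to a perfect matching $F$ of $G$, and for $c\in C^*$ the vertex $F(c)$ lies neither in $V(M^*)$ (saturated by $M^*\subseteq F$) nor in $C^*$ (independent), hence $F(c)\in\{v\}\cup(V(G)\setminus N[v])$. So $F$ matches exactly one vertex $c^*:=F(v)$ of $C^*$ to $v$ and all the others into $V(G)\setminus N[v]$; this already gives a matching of $C^*\setminus\{c^*\}$ into $V(G)\setminus N[v]$ saturating $C^*\setminus\{c^*\}$.

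What remains is to re-route, along an alternating path, so that $c^*$ too is covered — equivalently, to verify Hall's condition for the bipartite graph between $C^*$ and $V(G)\setminus N[v]$ — and this is where the real work lies. If Hall fails, there is an independent set $X\subseteq C^*$ with $|Y|<|X|$ for $Y:=N(X)\cap(V(G)\setminus N[v])$, whence $N(X)\subseteq\{v\}\cup V(M^*)\cup Y$. Two facts constrain this configuration sharply: at most one vertex of $C^*$ can lack a neighbour outside $N[v]$ (two such would both be forced to match to $v$ in any perfect matching extending $M^*$, which is impossible), and examining a perfect matching extending $M^*$ forces $v$ to be matched into $X$, forces $|Y|=|X|-1$, and makes $\{v\}\cup V(M^*)\cup Y$ a vertex cut, which by Lemma~\ref{1.1} has size at least $n+1$. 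I expect the crux of the whole argument to be converting this rigid picture into a contradiction: either by restructuring $M^*$ — each $x\in X$ has at least $t+1$ neighbours inside $V(M^*)$, leaving room to swap edges of $M^*$ and enlarge the set of $N(v)$-vertices one can match outward — or by feeding the cut into the Tutte–Berge-type inequality that characterises $n$-extendability. The remaining bookkeeping (the reduction to the bipartite matching, the size count $|N|\le n$, and the alternating-path manipulations) should be relatively routine.
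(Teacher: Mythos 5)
The paper does not prove this lemma --- it is quoted from Dean \cite{9} --- so your attempt has to be judged on its own. Your reduction is correct as far as it goes: the case $t=n$ is handled properly, and in the main case the target matching $N$ (with $|N|\le n$, $v\notin V(N)$, $N(v)\subseteq V(N)$) together with the first step (extend a maximum matching $M^*$ of $G[N(v)]$ to a perfect matching $F$, so that every vertex of $C^*$ except $c^*=F(v)$ is matched into $V(G)\setminus N[v]$) is sound. But the proof is not complete, and the missing step is not ``bookkeeping'': saturating the last vertex $c^*$, i.e.\ verifying Hall's condition between $C^*$ and $V(G)\setminus N[v]$, is exactly the crux, and you leave it open. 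Moreover, the ``rigid picture'' you extract when Hall fails does not by itself yield a contradiction with anything you have invoked: the cut $\{v\}\cup V(M^*)\cup Y$ has size $2\nu_0+|X|$, which can be as large as $n+t$, so Lemma \ref{1.1} only gives $2\nu_0+|X|\ge n+1$; and the Tutte-type inequality coming from extendability of $M^*$ gives $o\bigl(G-(\{v\}\cup V(M^*)\cup Y)\bigr)\le 1+|Y|=|X|$, while the $|X|$ vertices of $X$ are isolated there --- so equality can hold and no contradiction follows. A genuinely new idea is needed at this point, so the proposal has a real gap.

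A short complete argument avoids the Hall analysis entirely. Use the standard fact (Plummer) that if $G$ is $n$-extendable with $n\ge 1$ and $xy\in E(G)$, then $G-\{x,y\}$ is $(n-1)$-extendable: any $(n-1)$-matching of $G-\{x,y\}$ together with $xy$ is an $n$-matching of $G$, hence extends to a perfect matching containing $xy$. Now if $\{x_1y_1,\dots,x_ty_t\}$ is a $t$-matching in $G[N(v)]$, delete $x_t,y_t$: you obtain an $(n-1)$-extendable graph in which $v$ has degree $(n-1)+(t-1)$ and whose neighbourhood of $v$ still contains a $(t-1)$-matching. Iterating $t$ times (note $t\le n$, since $2t\le n+t$) produces an $(n-t)$-extendable graph in which $v$ has degree $n-t$. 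If $n-t\ge 1$ this contradicts the minimum degree at least $n-t+1$ forced by Lemma \ref{1.1}; if $n=t$, the vertex $v$ is isolated in a graph that still has a perfect matching, again a contradiction (this is the same endgame as your $t=n$ case). This induction is essentially the textbook route to Dean's lemma and replaces the alternating-path/Hall step you could not close.
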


We now describe some terminologies  and notations of a graph $G$.   A vertex is a $t$-vertex (resp., $t^{+}$-vertex, $t^{-}$-vertex) if ${d}_{G}(v)=t$ (resp., ${d}_{G}(v)\geq t$, ${d}_{G}(v)\leq t$). For a face $f$ of a connected plane graph $G$,
 the face degree of $f$, denoted by $d_G(f)$, is the length of the closed walk along the boundary of $f$. Such closed walk is called a {\em face walk}, and a {\em face cycle} whenever it is  cycle. Similarly  a $t$-face  (resp., $t^{+}$-face, and $t^{-}$-face) refers to a face with degree $t$ (resp., at least $t$, and at most $t$). 


The {\sl associated plane graph} $G^\times$ of a 1-planar graph $G$ is the plane graph that is obtained from $G$ by turning all crossings of $G$ into new vertices of degree four. These new vertices in $G^\times$ are called {\sl false vertices}, and the vertices of $G$ are called {\sl true vertices}.  A face in $G^\times$ is {\sl false} if it is incident with at least one false vertex; otherwise, it is {\sl true}.

Next we give a lemma which is used in proof of Theorem \ref{thm1.14}. This lemma shows that there exists a 1-matching in $G[N(v)]$ if a true vertex $v$ of the associated plane graph $G^\times$ of a 1-planar graph $G$ is incident with three consecutive false 3-faces.

\begin{lem}\label{2.2} Let $G^\times$ be the associated plane graph of a 1-planar graph $G$. If a true vertex $v$ is incident with three consecutive false 3-faces in $G^\times$, then $G[N(v)]$ contains an edge of $G$.
\end{lem}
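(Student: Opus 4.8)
The statement to prove is Lemma \ref{2.2}: if a true vertex $v$ of the associated plane graph $G^\times$ is incident with three consecutive false $3$-faces, then $G[N(v)]$ contains an edge of $G$. The plan is to analyze the local picture around $v$ in $G^\times$. Around $v$ the edges of $G^\times$ emanate in some cyclic order, and each false $3$-face incident with $v$ has a boundary consisting of two edge-segments at $v$ together with a path of length one or two through false vertices; I want to catalogue exactly which true vertices of $G$ lie on these faces and which edges of $G$ join them.

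First I would set up notation: let the three consecutive false $3$-faces be $f_1, f_2, f_3$ in cyclic order around $v$, sharing consecutive ``arms'' at $v$. Each arm at $v$ is either a true edge of $G$ (ending at a true neighbor of $v$) or a half of a crossing edge (ending at a false vertex). For a false $3$-face incident with $v$, the two arms of $v$ on its boundary together with one further edge close up the triangle; since a false vertex has degree $4$ and lies on the face, I would enumerate the possibilities for each $f_i$ according to whether each of its two bounding arms at $v$ is true or passes through a false vertex. A false $3$-face must contain at least one false vertex; because false vertices come from crossings of two edges of $G$, each false vertex $x$ on such a triangle is the crossing point of two edges of $G$, and the two ``true'' endpoints visible along the triangle are actual neighbors of each other in $G$ precisely when the relevant edge of $G$ is present. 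The key structural fact is that consecutiveness of $f_1,f_2,f_3$ forces the shared arms to alternate in type (a crossing edge contributes its two halves to two different incident faces), so among $f_1,f_2,f_3$ one cannot have all three arms between them be of the ``bad'' type; tracking this will produce two arms of $v$ that are genuine edges $vu_1$ and $vu_2$ of $G$ together with an edge $u_1u_2$ of $G$ appearing as (part of) the boundary of the middle face $f_2$, or an analogous configuration. That edge $u_1u_2$, with both ends in $N(v)$, is the desired edge of $G$ inside $G[N(v)]$.

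Concretely, I expect the argument to split into a small number of cases according to the crossing pattern of the edges bounding $f_1,f_2,f_3$: (i) the shared arm between two consecutive $f_i$ is a true edge $vu$, in which case the two faces on either side each contribute a neighbor of $u$ that is also a neighbor of $v$; (ii) the shared arm is a half-edge through a false vertex $x$, the crossing of edges $ab$ and $cd$ of $G$ with, say, $v\in\{a,b\}$; then the opposite edge of the crossing, together with the triangle structure, again yields two true vertices on the face boundary that are adjacent in $G$ and both adjacent to $v$. In every case one reads off an edge of $G$ with both endpoints in $N(v)$. I would organize this with a figure showing the generic local configuration and then verify the handful of subcases against it.

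The main obstacle I anticipate is the bookkeeping: making precise what ``three consecutive false $3$-faces'' means at the level of $G^\times$ (ensuring the arms are shared correctly and that no two of the three faces coincide), and ruling out degenerate drawings where the same true vertex plays two roles or where a putative edge $u_1u_2$ would have to be a crossing edge rather than lying wholly in $G[N(v)]$. I would handle the latter by invoking that $G$ is drawn $1$-planarly, so an edge that already crosses another cannot also be forced to cross a third, which limits how the triangles can nest; and by using simplicity of $G$ to forbid the multi-edge degeneracies, exactly as in the proof of Theorem \ref{thm1.15}. Once the case analysis is pinned down, extracting the edge $u_1u_2\in E(G[N(v)])$ is immediate.
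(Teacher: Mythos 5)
Your plan follows essentially the same route as the paper's proof: show that the non-$v$ corners $v_1,\dots,v_4$ of the three consecutive false $3$-faces alternate between true and false vertices, and then read off the edge of $G$ crossing at the intermediate false vertex, whose two true endpoints are both neighbours of $v$. The one point to tighten is the justification of the alternation: it follows because each false $3$-face at the true vertex $v$ has exactly one false corner (two adjacent false vertices in $G^\times$ would force an edge of $G$ to be crossed twice), rather than from your parenthetical remark about the two halves of a crossing edge.
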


\begin{proof}
 If a true vertex $v$ is incident with a false 3-face, then $v$ is adjacent to one true vertex and one false vertex on the false 3-face by 1-planarity of $G$. If $v$ is incident with three consecutive false 3-faces in $G^\times$, let $vv_{i}$, $1\le i\le 4$, denote four consecutive edges in $G^\times$ so that the $vv_iv_{i+1}v$ are false 3-faces for each $i=1,2,3$. Then $v_1,v_2, v_3$ and $v_4$ are false and true vertices in an alternative way.
If $v_{1}$ is a false vertex, then $v_{3}$ is a false vertex, $v_{2}$ and $v_{4}$ are true vertices and $v_{2}v_{4}$ is an edge of $G$ passing through $v_{3}$. Similarly, If $v_{1}$ is a true vertex, then $v_{3}$ is a true vertex and $v_{1}v_{3}$ is an edge of $G$ passing through $v_{2}$. Hence either $v_1v_3$ or $v_2v_4$ is an edge of $G$ (see Fig. \ref{p28}).
 That is, $G[N(v)]$ contains an edge.
 \end{proof}

\begin{figure}[htb]
\centering
\includegraphics[height=5cm,width=9cm]{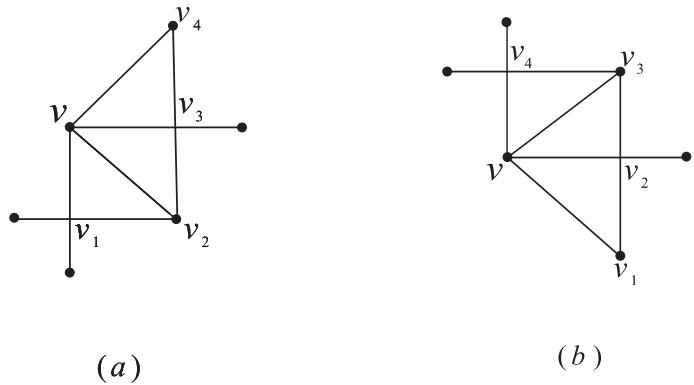}
\caption{\label{p28}A true vertex $v$ in $G^\times$ which is incident with three consecutive false 3-faces.}
\end{figure}


 \noindent\textbf{Proof of Theorem 1.11}~~Suppose to the contrary that there is a 5-extendable 1-planar graph $G$. By Lemma \ref{1.1}, $G$ is 6-connected and $\delta (G)\geq 6$. In the following, we will apply the discharging method on the associated plane graph $G^\times$.

By Euler's formula and degree-sum formulas:
\begin{equation}
|V(G^{\times})|-|E(G^{\times})|+|F(G^{\times})|=2
\end{equation}
\begin{equation}
\sum_{v\in V(G^{\times})}d_{G^{\times}}(v)=2|E(G^{\times})|,~\rm{and}
\end{equation}
\begin{equation}
\sum_{f\in F(G^{\times})}d_{G^{\times}}(f)=2|E(G^{\times})|,
\end{equation}

\noindent we have
\begin{eqnarray}
\sum_{v\in V(G^{\times})}(3d_{G^{\times}}(v)-10)+\sum_{f\in F(G^{\times})}(2d_{G^{\times}}(f)-10)=-20
\end{eqnarray}

First, we give an initial charge function:

\begin{itemize}
  \item $w(v) = 3{d}_{G^\times}(v)-10$, for each $v\in V(G^\times)$, and~
  \item $w(f) = 2{d}_{G^\times}(f)-10$, for each $f \in F(G^\times)$.
\end{itemize}

Next, we will design some discharging rules. Let $w'$ be the new charge after the discharging process. It suffices to show that $w'(x)\geq0$ for each $x\in {V(G^\times)\cup F(G^\times)}$, which leads to a contradiction with Eq. (3.4).

To get the target we now present four claims as follows.

\vspace{1ex}
\begin{raggedright}
 \textbf{Claim 1.}~~Every 6-vertex $v$ in $G^\times$ is not incident with a true 3-face, and is incident with at most four false 3-faces.
\end{raggedright}
\begin{proof}
 If $v$ is incident with a true 3-face $vv_{1}v_{2}$, then $v_{1}$ and $v_{2}$ are both true vertices and $v_{1}v_{2}$ is an edge of $G$. Here $v_{1}v_{2}$ is a 1-matching in $G[N(v)]$, contradicting Lemma \ref{2.1}. Thus every 6-vertex $v$ in $G^\times$ is not incident with a true 3-face.

 If $v$ is incident with at least five false 3-faces, then these five false 3-faces are consecutive. By Lemma \ref{2.2}, we can find a 1-matching in $G[N(v)]$, contradicting Lemma \ref{2.1}. Hence every 6-vertex $v$ in $G^\times$ is incident with at most four false 3-faces.
 \end{proof}

 \vspace{1ex}
\begin{raggedright}
 \textbf{Claim 2.}~~Every 7-vertex $v$ in $G^\times$ is incident with at most six false 3-faces. Moreover, if $v$ is incident with exactly six false 3-faces, then the other  face incident with $v$ is a $4^{+}$-face.
\end{raggedright}
\begin{proof}
 If $v$ is incident with seven false 3-faces, let  $vv_{i}$, $1\le i\le 7$, denote the seven consecutive edges in $G^\times$. By the proof of Lemma \ref{2.2} we have that  false vertices and true vertices alternate in the cyclic ordering $v_{1}, v_{2}, v_{3}, v_{4}, v_{5}, v_{6}, v_{7}, v_{1}$, which is obviously impossible. Hence every 7-vertex $v$ in $G^\times$ is incident with at most six false 3-faces.

 If $v$ is incident with exactly six false 3-faces, let $vv_{1}v_{2}, vv_{2}v_{3}, vv_{3}v_{4}, vv_{4}v_{5}, vv_{5}v_{6}, vv_{6}v_{7}$  denote these six consecutive false 3-faces. Then $v_1,v_2, v_3, v_4, v_5, v_6,v_7$ are false and true vertices in an alternative way. If $v_{1}$ is a true vertex (see Fig. \ref{p26}(a)), then $v_{3}, v_{5}$ and $v_{7}$ are all true vertices and $v_{2}, v_{4}$ and $v_{6}$ are all false vertices. So $v_{1}v_{3}$ is an edge of $G$ passing through $v_{2}$, and  $v_{5}v_{7}$ is an edge of $G$ passing through $v_{6}$. Then we find a 2-matching $\{v_{1}v_{3}, v_{5}v_{7}$\} in $G[N(v)]$, contradicting Lemma \ref{2.1}. Thus $v_{1}$ and $v_7$ are both  false vertices, and $v_2$ and $v_6$ are true vertices (see Fig. \ref{p26}(b)). By the 1-planarity the edges with end-vertices $v_2$ and $v_6$ in $G$ passing through $v_1$ and $v_7$ respectively are different. The other endvertices (may be the same) of them and vertices $v, v_{1}$ and $v_{7}$ all lie on the last one face incident with $v$, which must be a false $4^{+}$-face.
 \end{proof}

\begin{figure}[htp]
\centering
\includegraphics[height=5.5cm,width=11cm]{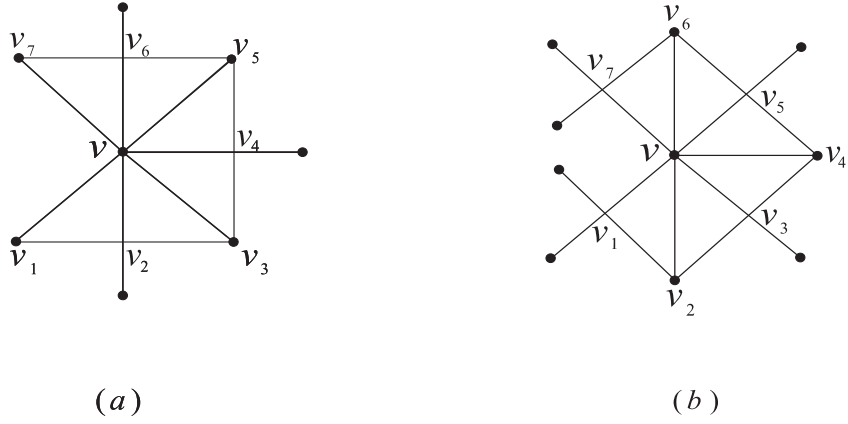}
\caption{\label{p26}7-vertex $v$ in $G^\times$ which is incident with exactly six false 3-faces.}
\end{figure}

 \vspace{1ex}
\begin{raggedright}
 \textbf{Claim 3.}~~Every 7-vertex $v$ in $G^\times$ is incident with at most two true 3-faces. Moreover, if $v$ is incident with  two true 3-faces, then such two true 3-faces are adjacent.
\end{raggedright}
\begin{proof}
 Obviously each true 3-face incident with $v$ has an edge  $G[N(v)]$. If $v$ is incident with two nonconsecutive true 3-faces, then  we can find a 2-matching in $G[N(v)]$ consisting of one edge in each such true 3-face, contradicting Lemma \ref{2.1}. Hence the assertion holds.
 \end{proof}

\vspace{1ex}
\begin{raggedright}
 \textbf{Claim 4.}~~Let $v$ be a 7-vertex of $G^\times$.  Then the following two statements hold.

(i) if $v$ is incident with one true 3-face, then $v$ is incident with at most four false 3-faces;

(ii) if $v$ is incident with two true 3-faces, then $v$ is incident with at most three false 3-faces.
\end{raggedright}

\begin{proof}
%
(i) Suppose that $v$ is incident with one true 3-face, say $f=vv_{1}v_{2}v$. Suppose to the contrary that $v$ is incident with at least five false 3-faces.

 If $v$ is incident with three consecutive false 3-faces such that they have no common edges with the true 3-face $vv_{1}v_{2}$. By Lemma \ref{2.2}, we can find an edge of $G[N(v)]$ among such three false 3-faces, which together with  edge $v_{1}v_{2}$ form a 2-matching in $G[N(v)]$,  contradicting Lemma \ref{2.1}.

 Otherwise, there are three consecutive false 3-faces, say $vv_2v_4, vv_4v_6, vv_6v_7$, incident with $v$ having a common edge $vv_{2}$ with $vv_{1}v_{2}$  and the other two consecutive false 3-faces, say $vv_1v_3$ and $vv_3v_5$, having a common edge $vv_{1}$ with $vv_{1}v_{2}$ (see Fig. \ref{p27}(a)). Since $v_1$ and $v_2$ are true vertices,  $v_{5}$ and $v_{6}$ are both true vertices, and $v_{1}v_{5}$ is an edge of $G$ passing through $v_{3}$ and $v_{2}v_{6}$ is an edge of $G$ passing through $v_{4}$. Now $\{v_{1}v_{5}, v_{2}v_{6}\}$ is a 2-matching in $G[N(v)]$, contradicting Lemma \ref{2.1}. This shows that Statement (i) holds.

 (ii) Suppose that  $v$ is incident with two true 3-faces.  Then such two true 3-faces are consecutive by Claim 3, which are denoted by $vv_{1}v_{2}$ and $vv_{2}v_{3}$. Suppose to the contrary that $v$ is incident with at least four false 3-faces.

 If there are  three consecutive false 3-faces incident with $v$, then they  have no common edges with at least one of the two true 3-face $vv_{1}v_{2}$ and $vv_{2}v_{3}$. Similar to (i)  we can find a 2-matching in $G[N(v)]$, contradicting Lemma \ref{2.1}.

 Otherwise, there are two consecutive false 3-faces, say $vv_1v_4$ and $vv_4v_6$, that has one common edge $vv_1$ with the true 3-face $vv_1v_2$, and the other two consecutive false 3-faces, say $vv_3v_5$ and $vv_5v_7$, that has one common edge $vv_3$ with the true 3-face $vv_2v_3$ (see Fig. \ref{p27}(b)).
Since $v_1$ and $v_3$ are true vertices,  $v_{6}$ and $v_{7}$ are both true vertices, and $v_{1}v_{6}$ is an edge of $G$ passing through $v_{4}$ and $v_{3}v_{7}$ is an edge of $G$ passing through $v_{5}$. Now $\{v_{1}v_{6}, v_{3}v_{7}\}$ is a 2-matching in $G[N(v)]$, contradicting Lemma \ref{2.1}. So Statement (ii) holds.
 \end{proof}

 \begin{figure}[htb]
\centering
\includegraphics[height=5cm,width=13cm]{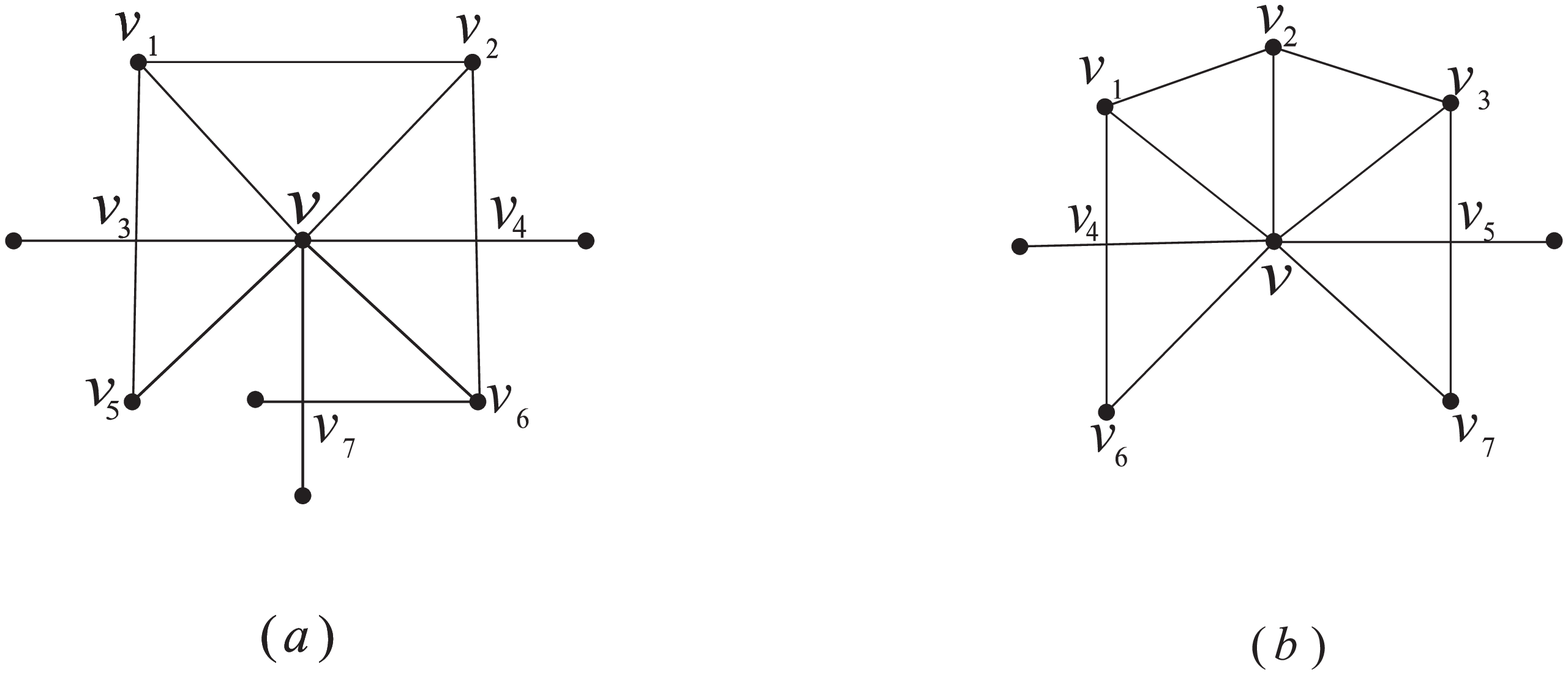}
\caption{\label{p27}7-vertex $v$ in $G^\times$ which is incident with at least one true 3-face.}
\end{figure}

 The following are the discharging rules.

 {\bf R1} Every false vertex in $G^\times$ gives $\frac{1}{2}$ to each incident $4^-$-face.

{\bf R2} Every $6^+$-vertex in $G^\times$ gives $\frac{4}{3}$ to each incident true 3-face, and gives $\frac{7}{4}$ to each incident false 3-face, and gives $\frac{1}{2}$ to each incident 4-face.

Next we verify that the new charge of each member in $ {V(G^\times)\cup F(G^\times)}$ is nonnegative. Firstly we consider any face $f\in F(G^\times)$. There are three cases according to the degree of a face.

{\bf Case 1.} ${d}_{G^\times}(f)=3$. If $f$ is a true 3-face,  then all vertices incident with $f$ are $6^+$-vertices since $\delta (G)\geq 6$. By R2, $w'(f)=2{d}_{G^\times}(f)-10+\frac{4}{3}\times 3=2\times3-10+4=0$. If $f$ is a false 3-face, then all vertices incident with $f$ are one false vertex and two $6^+$-vertices because $\delta (G)\geq 6$ and $G$ is a 1-planar graph. By R1 and R2, $w'(f)=2{d}_{G^\times}(f)-10+\frac{1}{2}+\frac{7}{4}\times2=2\times 3-10+\frac{1}{2}+\frac{7}{2}=0$.

{\bf Case 2.} ${d}_{G^\times}(f)=4$. Whether $f$ is a true 4-face or a false 4-face, by~R1~and~R2, $w'(f)=2{d}_{G^\times}(f)-10+\frac{1}{2}\times4=2\times4-10+2=0$.

{\bf Case 3.} ${d}_{G^\times}(f)\geq5$. By~R1~and~R2, $f$ has neither lost charge nor gained charge, so $w'(f)=2{d}_{G^\times}(f)-10\geq0$.

Now we  consider any vertex $v\in V(G^\times)$. There are four cases according to the degree of a vertex.

{\bf Case 4.} ${d}_{G^\times}(v)=4$. Because $v$ is incident with at most four $4^-$-faces, $w'(v)\geq3{d}_{G^\times}(v)-10-\frac{1}{2}\times4=3\times4-10-2=0$ by  R1.

{\bf Case 5.} ${d}_{G^\times}(v)=6$. By Claim 1 and R2, $w'(v)\geq3{d}_{G^\times}(v)-10-\frac{7}{4}\times4-\frac{1}{2}\times2=3\times6-10-7-1=0$.

{\bf Case 6.} ${d}_{G^\times}(v)=7$. If $v$ is not incident with a true 3-face, then by Claim 2 and R2, $w'(v)\geq3{d}_{G^\times}(v)-10-\frac{7}{4}\times6-\frac{1}{2}\times1=3\times7-10-\frac{21}{2}-\frac{1}{2}=0$; Otherwise, $v$ is incident with one or two true 3-faces by Claim 3. Further, by Claim 4 and R2, $w'(v)\geq3{d}_{G^\times}(v)-10-\frac{7}{4}\times4-\frac{4}{3}\times1-\frac{1}{2}\times2=3\times7-10-7-\frac{4}{3}-1=\frac{5}{3}>0$.

{\bf Case 7.} ${d}_{G^\times}(v)\geq8$. By R2, $w'(v)\geq3{d}_{G^\times}(v)-10-\frac{7}{4} {d}_{G^\times}(v)=\frac{5}{4} {d}_{G^\times}(v)-10\geq0$.

In summary we get that for each $x\in V(G^\times)\cup F(G^\times)$,
$w'(x)\geq0$, which is a contradiction. This completes the proof of Theorem \ref{thm1.14}.
\qed\\

We remark that  the non-5-extendability of 1-planar graphs in Theorem \ref{thm1.14} is  best possible by presenting a 4-extendable 1-planar graph drawn in Fig. \ref{p10}. We use a computer program to check the validation of the example: we find that the 1-planar graph has exactly 967469 4-matchings and each 4-matching can be contained  in a perfect matching. Further,  the graph has 1116948 perfect matchings and a 5-matching (see bold edges in Fig \ref{p10})  not extendable to a perfect matching.
\vspace{1cm}
\begin{figure}[htb]
\centering
\includegraphics[height=6cm,width=6.5cm]{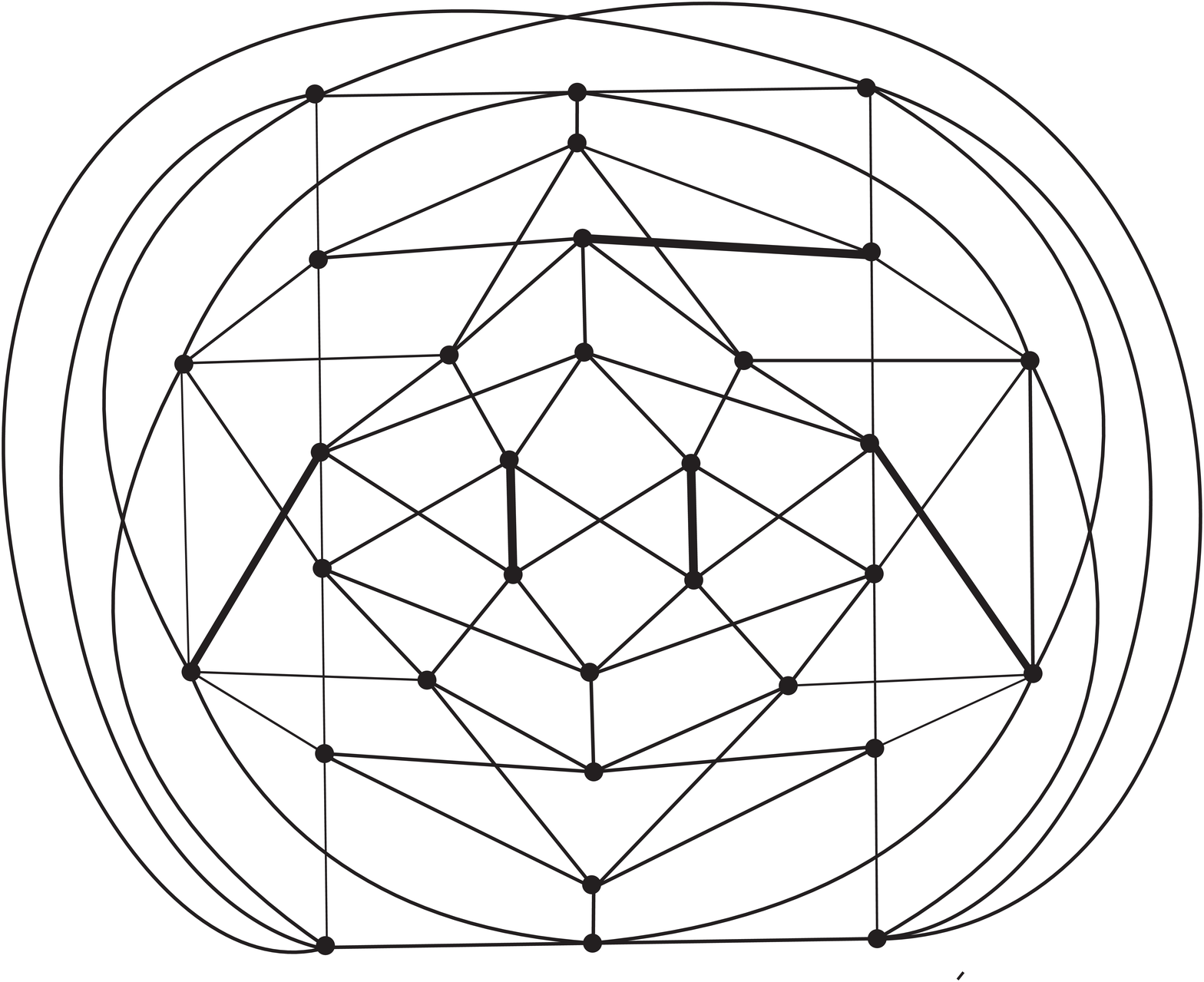}
\caption{\label{p10}A 4-extendable 1-planar graph.}
\end{figure}

\section{Remarks on  factor-criticality of 1-planar graphs}
We conclude with some remarks on  the maximum factor-criticality of (optimal) 1-planar graphs.  Yu \cite{19} and Favaron \cite{18} independently formulated the definition of a $k$-factor-critical graph. A graph of order $n$ is {\em $k$-factor-critical}, where $k$ is an integer with $0\leq k< n$ and $n+k$ is even, if $G-S$ admits a perfect matching for every set $S$ of $k$ vertices of $G$.  Favaron in \cite{18} obtained following basic properties of $k$-factor-critical graphs.

\begin{thm}[\cite{18}]\label{1.5}For $k\geq2$, any $k$-factor-critical graph of order $n> k$ is $(k-2)$-factor-critical.
\end{thm}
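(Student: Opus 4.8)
The plan is to reduce the assertion to its base case $k=2$ by a simple ``relativization'' of factor-criticality, and to settle that base case by hand. Before doing anything I would clear up the arithmetic forced by the definition: since $G$ is $k$-factor-critical we have $n\equiv k\pmod 2$, and together with $n>k$ this gives $n\ge k+2$; hence $n-(k-2)=n-k+2$ is even and at least $4$, and $0\le k-2<n$, so ``$(k-2)$-factor-critical'' is indeed a meaningful property for $G$.

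The base case I would isolate is: \emph{every $2$-factor-critical graph $H$ with $|V(H)|>2$ has a perfect matching} (this is exactly the theorem for $k=2$). Here $|V(H)|$ is even and at least $4$. Choosing any two vertices $a,b$ of $H$, the graph $H-\{a,b\}$ has a perfect matching and has at least two vertices, hence it contains an edge, which is also an edge of $H$. So $H$ has an edge $uv$; then $H-\{u,v\}$ has a perfect matching $M$, and $M\cup\{uv\}$ is a perfect matching of $H$.

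For the general case, let $G$ be $k$-factor-critical of order $n>k$ with $k\ge 2$, and let $S_0$ be an arbitrary set of $k-2$ vertices of $G$. Put $H:=G-S_0$; its order $n-k+2$ is even and exceeds $2$. For any two distinct vertices $a,b$ of $H$, the set $S_0\cup\{a,b\}$ has exactly $k$ vertices, so $H-\{a,b\}=G-(S_0\cup\{a,b\})$ has a perfect matching; therefore $H$ is $2$-factor-critical. By the base case, $H=G-S_0$ has a perfect matching, and since $S_0$ was an arbitrary $(k-2)$-set, $G$ is $(k-2)$-factor-critical.

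I do not expect a genuine obstacle here; the only points needing care are the parity and order conditions in the definition (so that $H$ really qualifies as $2$-factor-critical and so that the conclusion is well posed) and the elementary observation, used to launch the base case, that a graph on at least two vertices possessing a perfect matching must contain an edge. An alternative but longer route would combine Tutte's $1$-factor theorem with the fact that every $k$-factor-critical graph is $k$-connected: for a $(k-2)$-set $S_0$ one checks $o(G-S_0-T)\le |T|$ for every $T$ (writing $o(\cdot)$ for the number of odd components), handling $|T|\ge 2$ via the inequality $o(G-W)\le |W|-k$ for $|W|\ge k$ (immediate from $k$-factor-criticality) and $|T|\le 1$ via the fact that deleting fewer than $k$ vertices leaves $G$ connected. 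I would prefer the short reduction above.
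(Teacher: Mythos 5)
Your proof is correct. Note that the paper itself gives no argument for this statement: it is quoted from Favaron's paper (reference [18]), so there is no in-paper proof to match against. Your reduction is sound in every detail: the parity bookkeeping ($n\equiv k\pmod 2$ and $n>k$ give $n\ge k+2$, so $G-S_0$ has even order at least $4$ and the notion of $(k-2)$-factor-criticality is well posed), the observation that $G-S_0$ is $2$-factor-critical because deleting any further pair of vertices deletes exactly $k$ vertices of $G$, and the base case in which an edge of $H=G-S_0$ is extracted from a (nonempty) perfect matching of $H-\{a,b\}$ and then completed by a perfect matching of $H$ minus its endpoints. The usual textbook argument (and the spirit of Favaron's) instead invokes the companion fact that a $k$-factor-critical graph is $k$-connected, so that $G-S_0$ is connected and hence contains an edge $uv$, after which one concludes exactly as you do; your device of harvesting the edge from a perfect matching of $G-S_0-\{a,b\}$ makes the proof self-contained and avoids any appeal to connectivity, which is a small but genuine economy. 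Your alternative sketch via Tutte's theorem would also work but, as you say, is longer and needs the connectivity input you otherwise dispense with.
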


\begin{thm}[\cite{18}]\label{1.6}For $k\geq1$, any $k$-factor-critical graph of order $n> k$ is $k$-connected and $(k+1)$-edge-connected.
\end{thm}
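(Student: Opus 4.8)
The plan is to reduce the whole statement to a single Tutte-type deficiency inequality and then argue by parity. First I record an elementary fact: if a graph $H$ has a perfect matching, then for every $X\subseteq V(H)$ the number of odd components of $H-X$ is at most $|X|$; indeed, in each odd component of $H-X$ some vertex is matched by the perfect matching to a vertex outside that component, hence into $X$, and distinct odd components use distinct vertices of $X$. Applying this with $H=G-W$ for a $k$-subset $W$ of a set $U$ with $|U|\ge k$ yields the key consequence of $k$-factor-criticality: for every such $U$ the number of odd components of $G-U$ is at most $|U|-k$; in particular it is $0$ when $|U|=k$. A companion argument --- delete $N(v)$ together with enough filler vertices so that $v$ becomes isolated --- gives $\delta(G)\ge k+1$ (this uses $n>k$). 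Finally, since $n\equiv k\pmod 2$ and $n>k$ we have $n\ge k+2$; this is the single arithmetic fact that supplies all the room needed below. The common strategy is: from a hypothetical small separator, build a set $U$ with $|U|=k$ for which $G-U$ has an odd component, contradicting the deficiency inequality.

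For $k$-connectivity, assume $G$ is not complete (a complete graph on more than $k$ vertices is $k$-connected) and take a minimum vertex cut $S$, $|S|=s\le k-1$; let the components of $G-S$ be $C_1,\dots,C_r$ with $r\ge 2$, and set $a=|C_1|$, $R=C_2\cup\dots\cup C_r$. I enlarge $S$ to a $k$-set $U=S\cup Y$ by choosing $j:=|Y\cap C_1|$ vertices of $C_1$ and $k-s-j$ vertices of $R$, subject only to $0\le j\le a$ and $0\le k-s-j\le |R|$. Since $G-S$ has no edge between $C_1$ and $R$, the set $C_1\setminus U$ is a union of components of $G-U$ and has order $a-j$; hence it is enough to pick $j$ with $a-j$ odd and positive. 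A short check using only $n\ge k+2$ shows that the admissible $j$ always include such a value, and then $G-U$ has an odd component, contradicting the deficiency inequality. So every vertex cut of $G$ has at least $k$ vertices.

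For $(k+1)$-edge-connectivity, take a minimum edge cut $F$ with $|F|\le k$ and sides $A,B$. Because $\delta(G)\ge k+1>|F|$, every vertex of $A$ (resp. $B$) retains a neighbour on its own side, so $|A|,|B|\ge 2$; consequently the set $A_1$ of endpoints in $A$ of the edges of $F$ satisfies $|A_1|\le |F|\le k$ and $A\setminus A_1\ne\emptyset$. Deleting $A_1$ leaves $A\setminus A_1$ with no neighbour in $B$, and I then delete $k-|A_1|$ more vertices, split between $A\setminus A_1$ and $B$, so that the $A$-side retains an odd positive number of vertices --- feasible by $n\ge k+2$ and $A\setminus A_1\ne\emptyset$. (The one degenerate configuration, $|A_1|=k$ with the $A$-side forced to even order, is repaired by replacing a vertex of $A_1$ by the other endpoint of its unique edge of $F$, which flips the $A$-side parity.) This again produces an odd component of $G-U$ and contradicts the deficiency inequality, so every edge cut of $G$ has at least $k+1$ edges.

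The matching fact and the two concluding contradictions are routine; the delicate point --- where essentially all the work lies --- is the parity bookkeeping, i.e. checking that the extra $k-s$ (respectively $k-|A_1|$) deletions can always be distributed so as to force odd order on one side without emptying that side. This is purely an exercise in the arithmetic of the component/side sizes against the deletion budget, governed entirely by $n\ge k+2$ together with $\delta(G)\ge k+1$ (which keeps a side or component from being too small or from being absorbed into the cut). I do not expect any conceptual obstacle beyond a careful case split.
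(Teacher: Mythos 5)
The paper itself offers no proof of this theorem: it is quoted from Favaron \cite{18}, so the only benchmark is the standard argument, and your plan is essentially that argument --- extract the Tutte-type necessary condition $o(G-U)\le |U|-k$ for $|U|\ge k$, deduce $\delta(G)\ge k+1$ and $n\ge k+2$, and refute any vertex cut of size at most $k-1$ or edge cut of size at most $k$ by padding it to a $k$-set whose deletion leaves an odd component. I checked your deferred bookkeeping and it does go through: for a vertex cut $S$ with $s=|S|\le k-1$ one can take $j=a-1$ when $a\le k-s$ and $j\in\{k-s,k-s-1\}$ otherwise, using only $n\ge k+2$; and your repair of the degenerate case $|A_1|=k$ is legitimate, since $|A_1|=k$ forces $|F|=k$ with each vertex of $A_1$ on exactly one cut edge, so the swap is well defined and flips the parity of the surviving $A$-side while keeping it separated from $B$. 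The one step whose stated justification is wrong as written is the claim that $|A|,|B|\ge 2$ ``consequently'' gives $A\setminus A_1\ne\emptyset$: that implication is false in general, since every vertex of $A$ could be an endpoint of a cut edge. The fact you need is true, but it requires the degree count you only gesture at in your closing parenthesis: if $A=A_1$ and $2\le |A|\le k$, then each vertex of $A$ has at most $|A|-1\le k-1$ neighbours inside $A$, hence sends at least $k+2-|A|$ edges into $B$, giving $k\ge |F|\ge |A|\,(k+2-|A|)\ge 2k$, a contradiction for $k\ge 1$; and if $|A|\ge k+1$ then $|A|>|A_1|$ anyway. With that one line inserted (and the symmetric remark for $B$, which you use to bound the number of fillers available on the $B$-side), your proof is complete and follows the same route as the cited one.
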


\begin{thm}\label{thm1.17} No 1-planar graph is 7-factor-critical.
\end{thm}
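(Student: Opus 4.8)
The plan is to relate $k$-factor-criticality to matching extendability and then invoke the non-5-extendability of $1$-planar graphs established in Theorem \ref{thm1.14}. First I would recall the standard correspondence: if $G$ is $k$-factor-critical with $k \ge 2$, then for any edge $uv$ of $G$, deleting the endpoints of a suitable $(k-2)$-set together with the vertices $u,v$ and using a perfect matching of the remainder, one can show $G-\{u,v\}$ is $(k-2)$-factor-critical (this is essentially Theorem \ref{1.5}) and, more usefully, that $k$-factor-criticality forces a kind of matching extendability. The precise lemma I want is: a graph $G$ of order $n$ that is $k$-factor-critical with $n > k+2$ and $n \equiv k \pmod 2$ is $\lfloor (k-1)/2 \rfloor$-extendable, or some comparable bound; I would look for the cleanest version that makes $7$-factor-critical imply $5$-extendable. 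Concretely, if $G$ is $7$-factor-critical, take any $5$-matching $M = \{x_1y_1,\dots,x_5y_5\}$; I want to produce a perfect matching containing $M$.

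The key step is the following: given the $5$-matching $M$, pick the endpoints $x_1,\dots,x_5$ (five of the ten matched vertices) plus two more vertices; actually the right move is to delete a $7$-set $S$ chosen so that $G-S$ has a perfect matching that, together with edges of $M$, covers everything. More carefully, suppose $G$ is $7$-factor-critical of even order $n$ (so $n$ is odd plus $7$… wait, $n+k$ even forces $n$ odd when $k=7$). So a $7$-factor-critical graph has odd order. This is the crucial structural observation: being $7$-factor-critical means $n$ is odd, so $G$ itself has no perfect matching, but $G - S$ has one for every $7$-set $S$. To connect with extendability, I would instead argue: take any edge $e=uv$ and look at $G-u-v$, which has order $n-2$ (odd), and is $5$-factor-critical (by iterating Theorem \ref{1.5}, since $n-2 > 5$ for $n$ large; small cases handled separately). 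Then in $G-u-v$, for any $5$-matching $M'$ using $10$ of its vertices, I want $M' \cup \{e\}$ extendable. The cleanest route: show directly that $G-u-v$ is $4$-extendable for every edge $uv$, hence $G$ would be "$5$-extendable-like," contradicting Theorem \ref{thm1.14}. But one must check $G-u-v$ is still $1$-planar — it is, as a subgraph of a $1$-planar graph drawn in the plane.

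So the concrete argument I would write: suppose $G$ is a $7$-factor-critical $1$-planar graph; then $G$ is $7$-connected (Theorem \ref{1.6}), so $\delta(G)\ge 7$. By Lemma \ref{1.7}, $G$ has a vertex $v$ with $d_G(v)\le 7$, hence $d_G(v)=7$. Now I would apply Lemma \ref{2.1}-style reasoning, but adapted to factor-criticality. The analog of Lemma \ref{2.1} for $k$-factor-critical graphs should read: if $v$ has degree $k+t$ in a $k$-factor-critical graph $G$, then $G[N(v)]$ has no matching of size $t$ — this follows because one deletes $N(v)$ minus a perfect matching on a $t$-subset of $N(v)$... let me instead delete the $k$ vertices $N(v) \setminus \{$two endpoints of a putative edge$\}$ — for $k=7$, $t=0$, so $G[N(v)]$ has no matching of size $0$?? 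That's vacuous. The parity issue: $n+7$ even means $n$ odd, and $n > 7$; deleting all $7$ neighbors of $v$ isolates $v$ in $G - N(v)$ (since $\deg v = 7$), so $G-N(v)$ has no perfect matching — directly contradicting $7$-factor-criticality!

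Therefore the clean proof is: assume $G$ is $7$-factor-critical and $1$-planar. By Theorem \ref{1.6} $\delta(G)\ge 7$; by Lemma \ref{1.7} there is a vertex $v$ with $d_G(v)\le 7$, so $d_G(v)=7$. Let $S=N(v)$, a $7$-set. Since $n$ is odd (from $n+7$ even) and $n>7$, $G-S$ must have a perfect matching; but $v\in G-S$ is isolated in $G-S$, so $G-S$ has no perfect matching. This contradiction proves no $1$-planar graph is $7$-factor-critical. The only point needing care is confirming $n > 7$ so that $G-S\ne\emptyset$ and the definition applies, and confirming $v$ is genuinely isolated, i.e., that $v$ has no neighbor outside $S=N(v)$, which is immediate. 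The main (very minor) obstacle is just handling the degenerate small-order cases and making sure the parity condition $n+k$ even is correctly invoked; I expect no real difficulty here.
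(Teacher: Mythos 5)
Your final argument is correct, and it reaches the conclusion by a slightly different (and somewhat more self-contained) route than the paper. The paper's proof is a two-line affair: by Theorem \ref{1.6} a $7$-factor-critical graph is $8$-edge-connected, hence $\delta(G)\ge 8$, which immediately contradicts Lemma \ref{1.7}. You instead use only the $7$-connectivity part of Theorem \ref{1.6} to force $\delta(G)\ge 7$, locate a vertex $v$ of degree exactly $7$ via Lemma \ref{1.7}, and then observe that deleting $S=N(v)$ leaves $v$ isolated in $G-S$, which has even order $n-7\ge 2$ and therefore no perfect matching. This is in effect a direct re-derivation of the fact that a $k$-factor-critical graph has minimum degree at least $k+1$ (the content the paper extracts from edge-connectivity), so the two proofs rest on the same obstruction; yours just proves the needed degree bound from the definition rather than citing it. Your parity and small-order checks are all fine ($n>k$ and $n+7$ even give $n\ge 9$). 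One remark: the long preliminary detour through matching extendability and Theorem \ref{thm1.14} is a dead end that your final paragraph correctly abandons --- the "analogue of Lemma \ref{2.1} with $t=0$" observation you stumble on is exactly the right idea, and none of the extendability machinery is needed. If you write this up, keep only the last paragraph.
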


\begin{proof}  Suppose to the contrary that there exists a 7-factor-critical 1-planar graph $G$. Then by Theorem \ref{1.6}, $G$ is 8-edge-connected. Then $\delta(G)\geq 8$, contradicting Lemma \ref{1.7}.
\end{proof}

\begin{figure}[h]
\centering
\includegraphics[height=5cm,width=5.5cm]{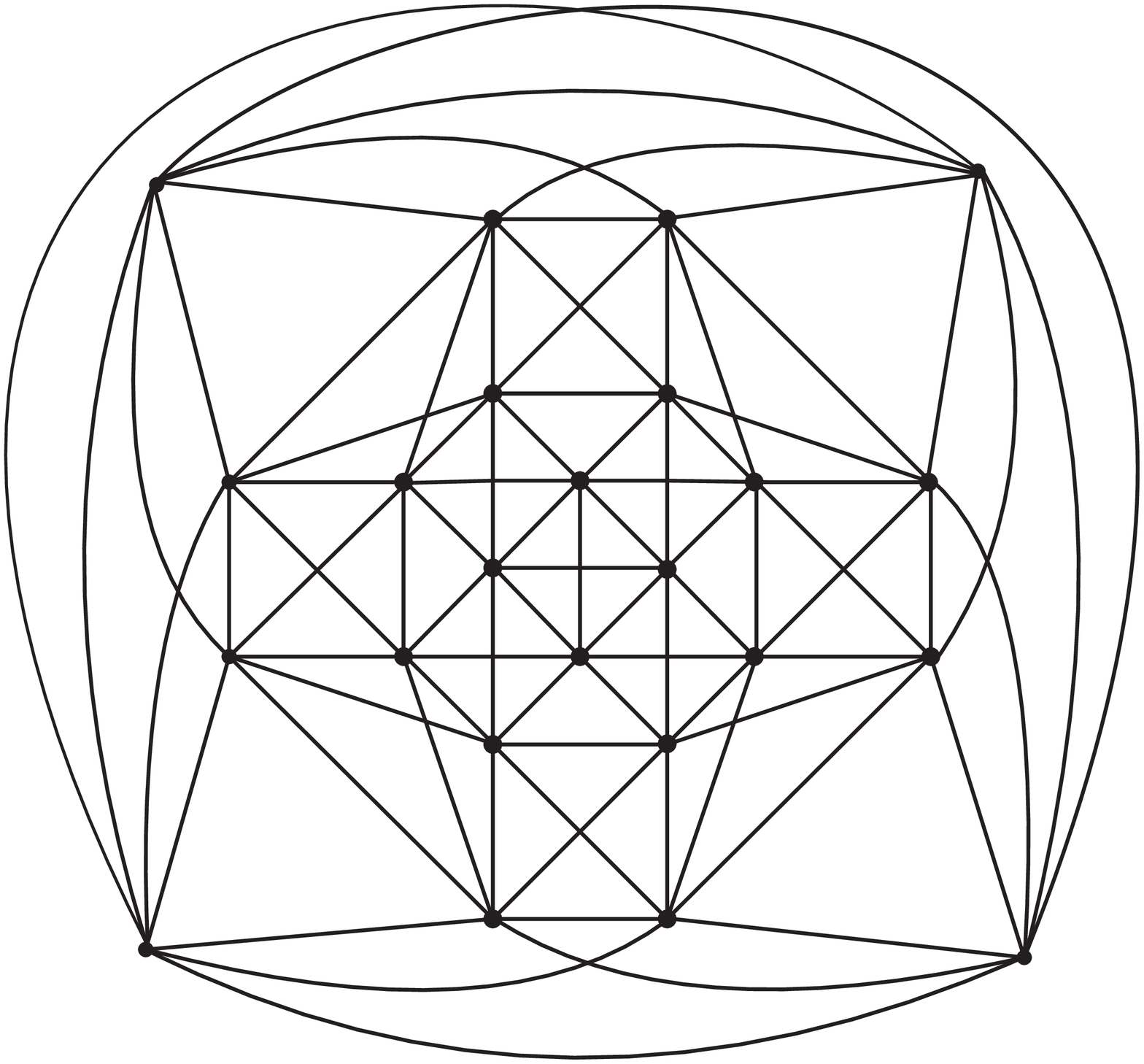}
\caption{\label{p44}A 6-factor-critical 1-planar graph.}
\end{figure}
\begin{figure}[h]
\centering
\includegraphics[height=5cm,width=5.5cm]{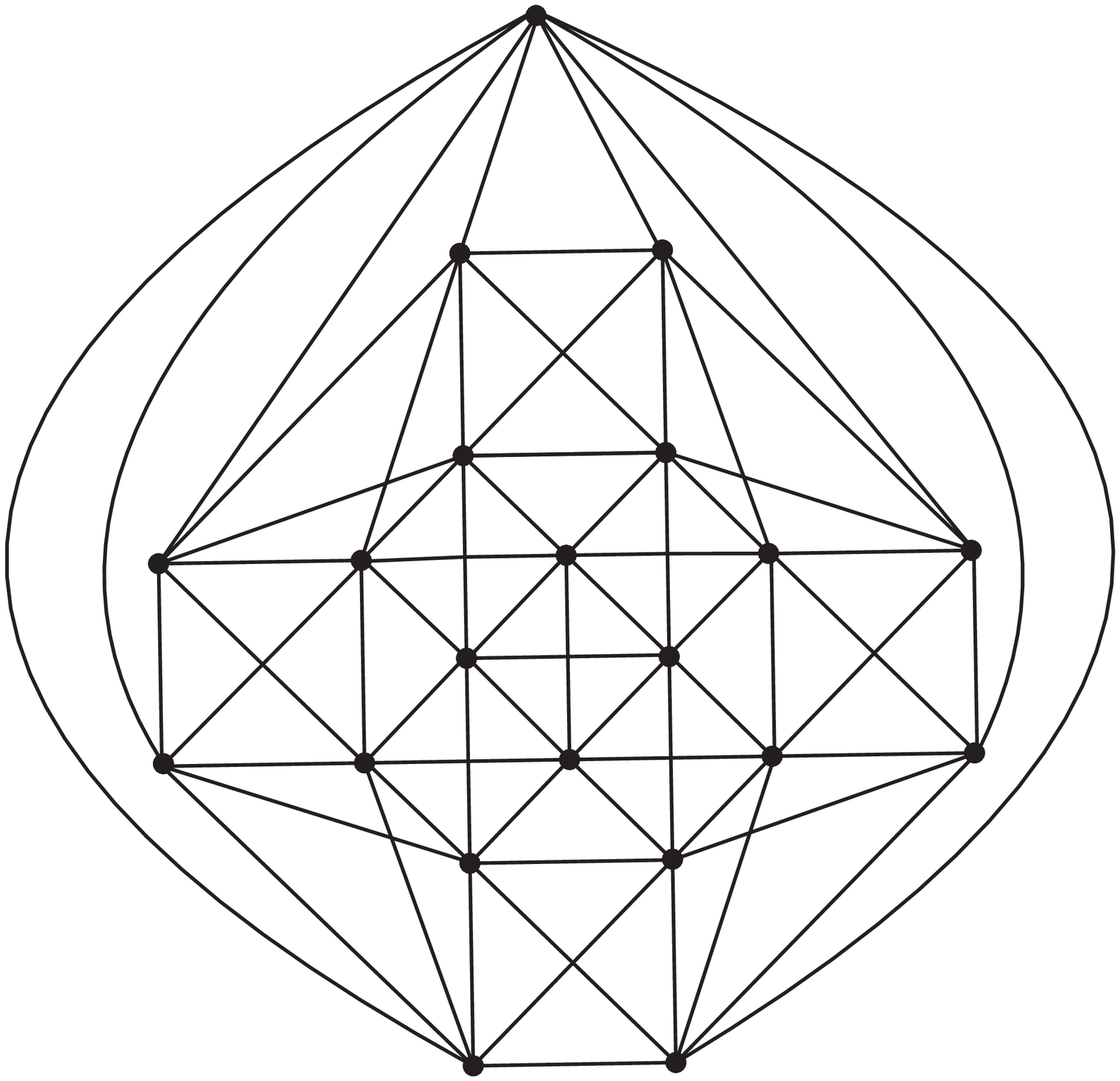}
\caption{\label{p42}A 5-factor-critical 1-planar graph.}
\end{figure}

In other word, each 1-planar graph is not 7-factor-critical.  The non-7-factor-criticallity of 1-planar graphs is  best possible by providing a 6-factor-critical 1-planar graph in Fig. \ref{p44} that is a 7-regular 1-planar graph taken from \cite{14} and a 5-factor-critical 1-planar graph of odd order in Fig. \ref{p42}.  We also present a computer  check to the validation of both examples: we find that the former has exactly 340361 perfect matchings and the removal of any 6 vertices results in a graph with a perfect matching, and  the removal of  any 5 vertices of the latter results in a graph with a perfect matching.

Next we turn to factor-criticality of optimal 1-planar graphs. The following theorem can be  obtained from Theorem \ref{thm1.15}. Here we give a direct proof.
\begin{thm}\label{thm1.17} No optimal 1-planar graph is 6-factor-critical.
\end{thm}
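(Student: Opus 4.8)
The plan is to argue by contradiction, mirroring the structure of the proof of Theorem \ref{thm1.15} but exploiting the parity constraint of $k$-factor-criticality instead of matching extendability. Suppose $G$ is a $6$-factor-critical optimal $1$-planar graph; then $|V(G)|$ is even and $|V(G)| > 6$. By Theorem \ref{1.6}, $G$ is $6$-connected, hence $\delta(G)\ge 6$, and by Lemma \ref{1.7} there is a vertex $v$ with $6 \le {d}_G(v) \le 7$. By Lemma \ref{3.2} the degree of every vertex of $G$ is even, so ${d}_G(v) = 6$.

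Next I would analyze the local structure around this $6$-vertex $v$ exactly as in the proof of Theorem \ref{thm1.15}: writing $vv_1,\dots,vv_6$ for the consecutive edges at $v$ in cyclic order, Lemma \ref{3.2} forces them to alternate crossing/non-crossing, so (up to relabeling) $vv_1, vv_3, vv_5$ are non-crossing and $v$ is incident in $Q(G)$ with exactly the three $4$-faces $vv_1v_2v_3v$, $vv_3v_4v_5v$, $vv_5v_6v_1v$. Hence $N_G(v) = \{v_1,\dots,v_6\}$ and $G$ contains the $6$-cycle $v_1v_2v_3v_4v_5v_6v_1$ together with all six edges $vv_i$. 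The key observation is then: delete the set $S = \{v_2, v_4, v_6\}$ of three vertices together with a further set of three vertices chosen from $V(G)\setminus(\{v\}\cup N_G(v))$ — wait, that over-counts; instead I would directly delete $S' = \{v_2,v_4,v_6, v, \text{and two more vertices}\}$... Let me restate this cleanly: the clean choice is to remove the $6$ vertices $\{v_1, v_2, v_3, v_4, v_5, v_6\} = N_G(v)$. Then in $G - N_G(v)$ the vertex $v$ becomes isolated (it has no neighbors left), so $G - N_G(v)$ has no perfect matching, contradicting $6$-factor-criticality.

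So the whole argument reduces to: find a $6$-vertex via Lemmas \ref{1.7}, \ref{3.2}, and \ref{1.6}; show its neighborhood has exactly six vertices using Suzuki's correspondence (Theorem \ref{3.1}) and Lemma \ref{3.2}; then deleting that neighborhood isolates $v$. The part that needs care — the "main obstacle," though it is minor — is confirming that $N_G(v)$ genuinely has six \emph{distinct} elements (so that we are deleting exactly $6$ vertices, as required by the definition of $6$-factor-criticality) and that the argument does not degenerate when $|V(G)|$ is small; the distinctness follows because $G$ is simple and $d_G(v) = 6$, and smallness is excluded since $6$-connectivity forces $|V(G)| \ge 7$, hence $|V(G)| \ge 8$ by parity, so $G - N_G(v)$ is nonempty and contains the isolated vertex $v$. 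I would also double-check that the isolated-vertex obstruction is legitimate: a graph with an isolated vertex trivially has no perfect matching, which is the contradiction we need.
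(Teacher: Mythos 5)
Your proposal is correct, but it reaches the contradiction by a genuinely different mechanism than the paper. The paper's proof is a pure degree/connectivity count: Theorem \ref{1.6} gives that a $6$-factor-critical graph is $7$-edge-connected, hence $\delta(G)\ge 7$; Lemma \ref{3.2} (all degrees even) then forces $\delta(G)\ge 8$, which already contradicts Lemma \ref{1.7} --- no set of vertices is ever deleted and the definition of factor-criticality is never invoked beyond the connectivity consequence. You instead use only the weaker $6$-connectivity to pin down a vertex $v$ with $d_G(v)=6$ and then apply the standard ``delete the closed-out neighborhood'' obstruction: removing the six distinct vertices of $N_G(v)$ isolates $v$, so $G-N_G(v)$ has no perfect matching. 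Both arguments are sound; yours is the factor-criticality analogue of Dean's Lemma \ref{2.1} and would generalize to show that no graph with a vertex of degree $d$ and order at least $d+2$ is $d$-factor-critical, while the paper's is shorter and exploits the stronger edge-connectivity bound. One remark on economy: the entire middle portion of your write-up --- the alternation of crossing and non-crossing edges at $v$, the three $4$-faces of $Q(G)$, and the $6$-cycle $v_1v_2\cdots v_6$ --- is not needed for your conclusion. All you use in the end is that $N_G(v)$ consists of six distinct vertices, which follows immediately from simplicity and $d_G(v)=6$; that structural analysis belongs to the proof of Theorem \ref{thm1.15}, where a specific $3$-matching must be exhibited, and can be deleted here.
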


\begin{proof}  Suppose to the contrary that there exists a 6-factor-critical optimal 1-planar graph $G$. Then by Theorem \ref{1.6}, $G$ is 7-edge-connected and $\delta(G)\geq 7$. From Lemma \ref{3.2}, each vertex of $G$ has  even degree, so $\delta(G)\geq 8$, contradicting Lemma \ref{1.7}.
\end{proof}

Here the non-6-factor-criticallity of optimal 1-planar graphs is  best possible by presenting 4- and 5-factor-critical optimal 1-planar graphs shown in Fig. \ref{p13} (a) and (b) respectively.  Their validation has also been confirmed by a computer program.\\

\begin{figure}[h]
\centering
\includegraphics[height=7cm,width=12cm]{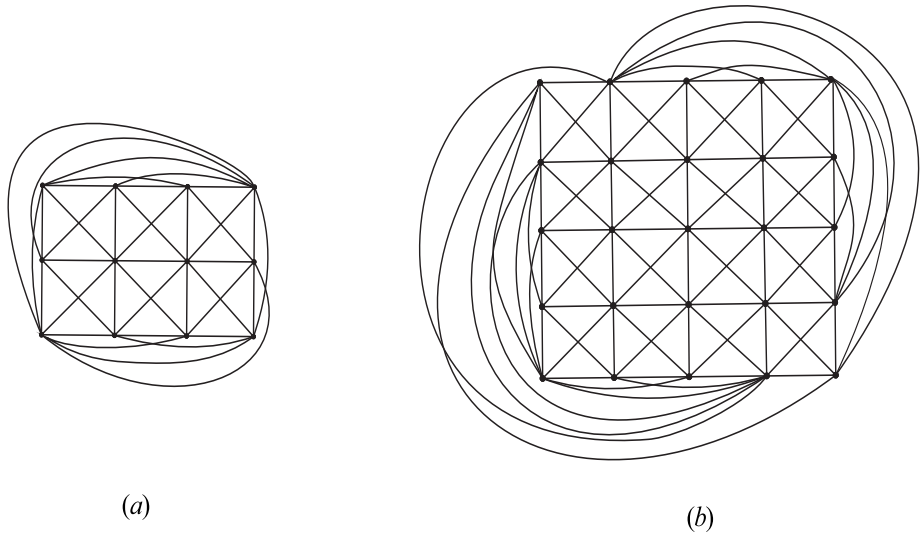}
\caption{\label{p13} 4- and 5-factor-critical optimal 1-planar graphs.}
\end{figure}

%
%
%

\end{document}